\documentclass{amsart}
\usepackage[nobysame,initials]{amsrefs}
\usepackage{bbm}

\usepackage{hyperref}
\usepackage{xcolor}
\usepackage[normalem]{ulem}

\newcommand{\R}{\mathbb{R}}
\newcommand{\E}{\mathbb{E}}

\newcommand{\F}{\mathcal{F}}

\newtheorem{theorem}{Theorem}
\newtheorem{lemma}{Lemma}
\newtheorem{claim}{Claim}

\newcommand{\conv}{{\rm conv }\,}

\newcommand{\dx}{{\rm d}}
\newcommand{\pr}{\mathbb P}
\newcommand{\indi}{\mathbbm{1}}

\renewcommand{\d}{\mathrm{d}}
\newcommand{\dual}[1]{#1^{*,r}}

\DeclareMathOperator{\Var}{Var}
\DeclareMathOperator{\inti}{int}

\DeclareMathOperator{\diam}{diam}

\allowdisplaybreaks

\date{\today}

\author{Ferenc Fodor}
\address{Department of Geometry, Bolyai Institute, 
University of Szeged, Aradi v\'ertan\'uk tere 1, 6720 Szeged, Hungary}
\email{fodorf@math.u-szeged.hu}

\author{Viktor V\'{\i}gh}
\address{Department of Geometry, Bolyai Institute, 
University of Szeged, Aradi v\'ertan\'uk tere 1, 6720 Szeged, Hungary}
\email{vigvik@gmail.com}

\title{Variance estimates for random disc-polygons in smooth
convex discs}

\begin{document}

\begin{abstract}
In this paper we prove asymptotic upper bounds on the variance
of the number of vertices and missed area of inscribed random disc-polygons 
in smooth convex discs whose boundary is $C^2_+$. We also consider a circumscribed variant of this probability model in which the convex disc is approximated by the intersection of random circles.
\end{abstract}

\subjclass[2010]{Primary 52A22, Secondary 60D05}
\keywords{Disc-polygon, random approximation, variance}

\maketitle

\section{Introduction and results}
Let $K$ be a convex disc (compact convex set with non-empty interior) 
in the Euclidean plane $\R^2$. Assume that the boundary $\partial K$ is of class $C^2_+$, that is,
two times continuously differentiable and the curvature at every point of 
$\partial K$ is strictly positive. Let $\kappa(x)$ denote the 
curvature at $x\in\partial K$, and let $\kappa_m$ ($\kappa_M$)
be the minimum (maximum) of $\kappa (x)$ over $\partial K$. It is known, cf. \cite[Section~3.2]{Sch14}, that
in this case a closed circular disc of radius $r_m=1/\kappa_M$ rolls freely in $K$, that is, for each $x\in\partial K$, there exists a $p\in\R^2$ with $x\in r_mB^2+p\subset K$. Moreover, $K$ slides
freely in a circle of radius $r_M=1/\kappa_m$, which means that
for each $x\in\partial K$ there is a vector $p$ such that
$x\in r_M \partial B^2+p$ and $K\subset r_M B^2+p$. The latter yields that for any two points $x,y\in K$, the intersection of all closed circular discs of radius $r\geq r_M$, denoted by $[x,y]_r$ and called the $r$-spindle of $x$ and $y$, is also contained in $K$. Furthermore, for any
$X\subset K$, the intersection of all radius $r$ circles, called the closed $r$-hyperconvex hull (or $r$-hull for short) and denoted by $\conv_r(X)$, is contained
in $K$.  The concept of hyperconvexity, also called spindle convexity or $r$-convexity, has been much investigated recently. This notion naturally arises in many questions where a convex set can be represented as the intersection of equal radius closed balls. For more information and references about hyperconvexity, we refer to the paper of Bezdek, L\'angi, Nasz\'odi, Papez \cite{BLNP07}, see also a short overview in G. Fejes T\'oth, Fodor \cite{FTGF2015} and Fodor, Kurusa, V\'\i gh \cite{FoKuVi16}.  

Let $K$ be a convex disc with $C^2_+$ boundary, and let 
$X_n=\{x_1,\ldots, x_n\}$ be a sample of $n$ independent
random points chosen from $K$ according to the uniform probability 
distribution. 
The (linear) convex hull $\conv (X_n)$ is a random 
convex polygon in $K$. The geometric properties of
$\conv (X_n)$ have been investigated extensively in the 
literature. For more information on this topic and further references we refer to the surveys by B\'ar\'any \cite{Ba08}, Schneider \cites{Sch17, sch08}, Weil and Wieacker \cite{WeWi93}.

Here we examine the following random model. Let $r\geq r_M$, and let 
$K_n^r=\conv_r(X_n)$ be the $r$-hull of $X_n$, which is 
a (uniform) random disc-polygon in $K$. 
Let $f_0(K_n^r)$ denote the number 
of vertices (and also the number of edges) of $K_n^r$, and let 
$A(K_n^r)$ denote the area of $K_n^r$. 
The asymptotic behaviour of the expectation of the random variables
$A(K_n^r)$ and $f_0(K_n^r)$ were investigated by Fodor, Kevei and V\'\i gh in
\cite{FoKeVi2014}, where (among others) the following two theorems were proved.

\begin{theorem}[\cite{FoKeVi2014}]\label{thm:expectation}
Let $K$ be a convex disc whose boundary is 
of class $C^2_+$. For any $r>r_M$ it holds that                         
\begin{equation}\label{f:vertices-R}
\lim_{n\to\infty}\E(f_0(K_n^r))\cdot n^{-1/3}=\sqrt[3]{\frac 2{3A(K)}}\cdot 
\Gamma\left ( \frac 53 \right ) \int_{\partial K} 
\left (\kappa(x)-\frac{1}{r} \right)^{1/3} \dx x,
\end{equation}
and 
\begin{equation}\label{f:area-R}
\lim_{n\to\infty}\E(A(K\setminus K_n^r))\cdot n^{2/3}=
\sqrt[3]{\frac {2A(K)^2}{3}}\Gamma\left ( \frac 53 \right ) 
\int_{\partial K} \left (\kappa(x)-\frac{1}{r}
 \right)^{1/3} \dx x.
 \end{equation}
\end{theorem}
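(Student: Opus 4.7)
The plan is to prove the area formula \eqref{f:area-R} and deduce the vertex formula \eqref{f:vertices-R} from it by an Efron-type identity. Since $x_i\in X_n$ is a vertex of $K_n^r$ if and only if $x_i\notin\conv_r(X_n\setminus\{x_i\})$, exchangeability yields
\begin{equation*}
\E(f_0(K_n^r)) \;=\; \frac{n}{A(K)}\,\E(A(K\setminus K_{n-1}^r)),
\end{equation*}
and a direct check confirms that the prefactors in \eqref{f:vertices-R} and \eqref{f:area-R} are consistent under this relation: dividing by $A(K)$ converts $\sqrt[3]{2A(K)^2/3}$ into $\sqrt[3]{2/(3A(K))}$.

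For the area asymptotics I would start from the Fubini identity
\begin{equation*}
\E(A(K\setminus K_n^r)) \;=\; \int_K \pr(x\notin K_n^r)\,\d x,
\end{equation*}
and note that $x\notin K_n^r$ precisely when some ``$r$-cap'' $K\setminus D$ containing $x$, with $D$ a closed radius-$r$ disc, is disjoint from $X_n$. A standard sandwich estimate shows that the contribution of points at depth much larger than $n^{-2/3}$ from $\partial K$ is exponentially small, so the analysis concentrates in a boundary strip of width of order $n^{-2/3}$.

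Within this strip, I would parameterize $x$ by the arclength $s$ of its foot $x_0(s)\in\partial K$ and by the depth $t=\mathrm{dist}(x,\partial K)$. Approximating both $\partial K$ near $x_0$ by its osculating parabola of curvature $\kappa=\kappa(x_0)$ and the boundary arc of an excluding radius-$r$ disc by a parabola of curvature $1/r$, a short computation shows that the minimal (perpendicular) $r$-cap at $x$ has area proportional to $t^{3/2}(\kappa-1/r)^{-1/2}$. The positivity of $\kappa-1/r$, guaranteed by $r>r_M$, is the geometric source of the weight $(\kappa-1/r)^{1/3}$ appearing in both \eqref{f:vertices-R} and \eqref{f:area-R}.

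A B\'ar\'any--Larman-type sandwich, based on parameterizing the one-parameter family of radius-$r$ discs excluding $x$, reduces $\pr(x\notin K_n^r)$ to an exponential $\exp(-nv(t,s)/A(K))$ in a suitable constant multiple $v(t,s)$ of the minimal-cap area. Rescaling $t=\tau\,n^{-2/3}$, the integral takes the form
\begin{equation*}
n^{-2/3}\int_{\partial K}\int_0^\infty \exp\!\bigl(-a(s)\,\tau^{3/2}\bigr)\,\d\tau\,\d s
\end{equation*}
with $a(s)$ a constant multiple of $(\kappa(s)-1/r)^{-1/2}/A(K)$; the inner Gamma integral evaluates to $a(s)^{-2/3}\Gamma(5/3)$, which produces the weight $(\kappa-1/r)^{1/3}$, and collecting the numerical constants reproduces the prefactor $\sqrt[3]{2A(K)^2/3}\,\Gamma(5/3)$ of \eqref{f:area-R}. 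The main technical obstacle is precisely this sandwich step: one has to verify, uniformly in $x_0\in\partial K$, that the contribution of non-perpendicular excluding discs amounts to only lower-order corrections, which requires careful integral-geometric control over the family of excluding radius-$r$ discs and uses the $C^2_+$ smoothness of $\partial K$.
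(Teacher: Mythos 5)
Your plan inverts the route of the source paper: Theorem~\ref{thm:expectation} is not proved here but quoted from \cite{FoKeVi2014}, where one first computes $\E(f_0(K_n^r))$ by integrating, over pairs of sample points, the probability that the shorter unit-circle arc they span is an edge, using precisely the reparametrization \eqref{kulcstrafo} with Jacobian \eqref{jacobi} and the disc-cap asymptotics \eqref{Vandellestimate}, and then deduces the missed-area formula \eqref{f:area-R} from \eqref{f:vertices-R} via Efron's identity. Your Efron step (a vertex of $K_n^r$ is exactly a point $x_i\notin\conv_r(X_n\setminus\{x_i\})$) and the consistency check of the two prefactors are correct, as are the localization to a boundary strip of width of order $n^{-2/3}$ and the estimate $A_{\min}(x)\approx t^{3/2}(\kappa-1/r)^{-1/2}$ for the minimal excluding cap at depth $t$.

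The genuine gap is the step you yourself flag as the main obstacle: the claim that a B\'ar\'any--Larman-type sandwich reduces $\pr(x\notin K_n^r)$ to $\exp(-nv(t,s)/A(K))$ with $v(t,s)$ a constant multiple of the minimal-cap area. Sandwich and union-bound arguments give two-sided bounds with \emph{different} constants in the exponent; they yield $\E(A(K\setminus K_n^r))=\Theta(n^{-2/3})$ but cannot identify the limit in \eqref{f:area-R}. Worse, the assertion that ``collecting the numerical constants reproduces the prefactor'' fails numerically: by \eqref{Vandellestimate} the minimal cap at depth $t$ has area $\sim\tfrac43\sqrt{2/(\kappa-1/r)}\,t^{3/2}$, and inserting the pure exponential $\exp(-nA_{\min}(x)/A(K))$ into $\int_K\pr(x\notin K_n^r)\,\dx x$ gives the per-arclength coefficient $(9/32)^{1/3}A(K)^{2/3}\Gamma(5/3)(\kappa-1/r)^{1/3}$, whereas \eqref{f:area-R} requires $(2/3)^{1/3}A(K)^{2/3}\Gamma(5/3)(\kappa-1/r)^{1/3}$; matching them would force the unexplained choice $v=(3/4)^{3/2}A_{\min}$. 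The discrepancy is structural: $\{x\notin K_n^r\}$ is a union over a one-parameter family of excluding radius-$r$ discs, so its probability exceeds the single-cap bound $(1-A_{\min}/A(K))^n$, and its scaled limit is not a pure exponential of $A_{\min}$. To close the gap you must compute $\lim_{n\to\infty}\pr(x\notin K_n^r)$ exactly --- for instance by conditioning on the two sample points spanning the supporting arcs of $\conv_r(X_n\cup\{x\})$ at $x$, which leads back to the same pair integral --- or simply compute $\E(f_0)$ first as in \cite{FoKeVi2014} and obtain the area statement by Efron's identity, i.e.\ the reverse of your plan.
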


\begin{theorem}[\cite{FoKeVi2014}]\label{thm:circle}
For $r>0$ let $K=rB^2$ be the closed circular disc of
radius $r$. Then 
\begin{equation}\label{circle-sides-exp}
\lim_{n\to\infty}\E (f_0((K)_n^r)=\frac{\pi^2}{2},
\end{equation}
and
\begin{equation}
\lim_{n\to\infty}\E (K\setminus (K)_n^r)\cdot n=\frac{r^2\cdot\pi^3}{3}.
\end{equation}
\end{theorem}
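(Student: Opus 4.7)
The plan is to combine an Efron-type identity with Fubini, giving
\begin{equation*}
\E(f_0(K_n^r)) = n\,\pr(x_n \notin \conv_r(X_{n-1})), \quad \E(A(K \setminus K_n^r)) = \int_K \pr(p \notin K_n^r)\,\dx p.
\end{equation*}
The decisive geometric fact in the degenerate regime $K = rB^2$ is that $K$ is itself a radius-$r$ disc: a point $p \in K$ fails to lie in $K_n^r$ exactly when there is a translate $c + rB^2$, with $c \ne 0$, that contains $X_n$ and excludes $p$. Equivalently, the cap $C_c := K \setminus (c + rB^2)$ contains $p$ and is disjoint from $X_n$.

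Rotational symmetry reduces both quantities to one-dimensional integrals in $\rho := |p|$. For a fixed $p_\rho$ I would parametrise the family of extremal caps $C_c$ containing $p_\rho$ --- those whose bounding arc passes through $p_\rho$, so that $c$ lies on the circle of radius $r$ around $p_\rho$ --- by the angle of $c$ on this circle. The lens-area formula for two intersecting radius-$r$ discs expresses $A(C_c)$ as an explicit trigonometric function of this angle, and the single-cap avoidance probability $(1 - A(C_c)/(\pi r^2))^n$ concentrates where $A(C_c) = O(1/n)$, which corresponds to a boundary layer of width $O(1/n)$ near $\partial K$.

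Carrying out the appropriate rescaling and integration against the Jacobian of this parametrisation, standard Gamma-type identities should then produce the limits $\pi^2/2$ for the expected vertex count and $r^2\pi^3/3$ for the rescaled missed area. The principal obstacle is that the events $\{X_n \cap C_c = \emptyset\}$ are strongly dependent across $c$, so a naive union bound overcounts; one must instead isolate, for each realisation with $p \notin K_n^r$, a canonical extremal cap --- for instance the one whose bounding arc supports the dual convex body $\bigcap_i (x_i + rB^2)$ at an extreme point in the direction opposite to $p$ --- and integrate over these canonical caps alone. Contributions from non-extremal configurations and from the interior region where $\rho$ is bounded away from $r$ (and the exclusion probability is exponentially small in $n$) must then be checked to be of lower order.
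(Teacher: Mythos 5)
Your two starting identities are correct (granted the spindle-convex analogue of Efron's observation, namely that $x_n$ is a vertex of $\mathrm{conv}_r(X_n)$ if and only if $x_n\notin\mathrm{conv}_r(X_{n-1})$, which rests on the fact that a disc-polygon is the $r$-hull of its vertex set), and your scaling heuristics (relevant empty caps of area $O(1/n)$, boundary layer of width $O(1/n)$) are right for the circle. But there are two genuine gaps. First, the step you yourself call the principal obstacle is not carried out: to evaluate $\pr(p\notin K_n^r)$ you must replace the union of the strongly dependent events $\{X_n\cap C_c=\emptyset\}$ by an integral over a precisely defined canonical empty cap, which requires a rigorous definition, a parametrisation of that cap with its Jacobian and joint density with respect to the sample, and a proof that non-canonical configurations are negligible; none of this is done, and it is essentially the whole proof. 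The route of \cite{FoKeVi2014} (whose ingredients are recalled in Sections~\ref{preparations} and~\ref{proofofthm4}) avoids the dependence problem entirely: one writes $\E f_0(K_n^r)$ as $\binom{n}{2}$ times the probability that a fixed pair spans an edge, summed over the two radius-$r$ circles through the pair, and evaluates this with the reparametrisation $(x_1,x_2)=\Phi(u,t,u_1,u_2)$, whose Jacobian for $K=B^2$, $r=1$ is $t\,|u_1\times u_2|$, together with $A(t)\sim 2t$ and $\ell(t)\to\pi$; a single beta/Gamma integral then gives the constant $\pi^2/2$.

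Second, and more seriously, your own Efron identity makes the two limits you promise incompatible: since $\pr(x_{n+1}\notin K_n^r)=\E A(K\setminus K_n^r)/A(K)$, one has $\E f_0(K_{n+1}^r)=\frac{n+1}{\pi r^2}\,\E A(K\setminus K_n^r)$ for every $n$, so the two asymptotic constants produced by your scheme must stand in the exact ratio $A(K)=\pi r^2$. The vertex limit $\pi^2/2$ (which the pair computation above confirms) then forces $\lim_{n\to\infty} n\,\E A(K\setminus K_n^r)=\pi^3r^2/2$ within your framework, while you announce that the same calculation will return $\pi^3r^2/3$; both numbers cannot come out of the argument as described. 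You need to perform the limit computation explicitly rather than appeal to ``standard Gamma-type identities'', and then confront the resulting tension between what your method yields and the constant $r^2\pi^3/3$ displayed in the statement, which is not in the Efron ratio with $\pi^2/2$.
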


$\Gamma(\cdot)$ denotes Euler's gamma function, and integration 
on $\partial K$ is with respect to arc length

Observe that in Theorem~\ref{thm:circle} the expectation $\E (f_0((K)_n^r)$ of the number of vertices tends to a constant as $n\to\infty$. This is a surprising fact that has no clear analogue in the linear case. A similar phenomenon was recently established by B\'ar\'any, Hug, Reitzner, Schneider \cite{BHRS2017} 
about the expectation of the number of facets of certain spherical random polytopes in halfspheres, see \cite[Theorem~3.1]{BHRS2017}.

We note that Theorem~1 can also be considered as a generalization of the classical asymptotic results of R\'enyi and Sulanke about the expectation of the vertex number, missed area and perimeter difference of (linear) random convex polygons in smooth convex discs, cf. \cites{ReSu63, ReSu64}, in the sense that it reproduces the formulas of R\'enyi and Sulanke in the limit as $r\to\infty$, see \cite[Section~3]{FoKeVi2014}.

Obtaining information on the second order properties of random variables associated with random polytopes is much harder than on first order properties. It is only recently that variance estimates, laws of large numbers, and central limit theorems have been proved in various models, see, for example, B\'ar\'any, Fodor, V\'\i gh \cite{BFV10}, B\'ar\'any, Reitzner \cite{BR10}, B\'ar\'any, Vu \cite{BaraVu}, Fodor, Hug, Ziebarth \cite{FHZ16},  B\"or\"oczky, Fodor, Reitzner, V\'\i gh \cite{BFRV09}, Reitzner \cites{R2003, Rei05}, Schreiber, Yukich \cite{SY}, Vu \cites{Vu05, Vu06}, and the very recent papers by Th\"ale, Turchi, Wespi \cite{TTW17}, Turchi, Wespi \cite{TW18+}. For an overview, we refer to B\'ar\'any \cite{Ba08} and Schneider \cite{Sch17}.

In this paper, we prove the following asymptotic estimates for
the variance of $f_0(K_n^r)$ and $A(K_n^r)$ in the spirit of Reitzner \cite{R2003}. 

For the order of magnitude, we use the Landau symbols: if for two functions $f,g:I\to \R$, $I\subset\R$, there is a constant $\gamma>0$ such that $|f|\leq \gamma g$ on $I$, then we write $f\ll g$ or $f=O(g)$. If $f\ll g$ and $g\ll f$, then this fact is indicated by the notation $f\approx g$.

\begin{theorem}\label{thm:variance}
With the same hypotheses as in Theorem~\ref{thm:expectation}, it holds that
\begin{equation}
\label{vertex-variance}
\Var (f_0(K_n^r))\ll n^{\frac 13},
\end{equation}
and
\begin{equation}
\label{area-variance}
\Var (A(K_n^r))\ll n^{-\frac 53},
\end{equation}
where the implied constants depend only on $K$ and $r$.  
\end{theorem}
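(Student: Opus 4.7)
\medskip

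\noindent\textbf{Proof plan.}
I would follow the Efron-Stein jackknife strategy that Reitzner~\cite{R2003} used for linear random polytopes, replacing linear caps by \emph{disc caps} throughout. With $X_{n+1}=X_n\cup\{x_{n+1}\}$ an enlarged uniform sample, the Efron-Stein inequality applied to a symmetric functional $V$ yields
\[
\Var(V(K_{n+1}^r)) \leq (n+1)\,\E\bigl[(V(K_{n+1}^r)-V(K_n^r))^2\bigr],
\]
so it suffices to prove $\E[(f_0(K_{n+1}^r)-f_0(K_n^r))^2] \ll n^{-2/3}$ and $\E[(A(K_{n+1}^r)-A(K_n^r))^2] \ll n^{-8/3}$.

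Since $X_n\subset X_{n+1}$ implies $K_n^r\subseteq K_{n+1}^r$, both increments vanish unless $x_{n+1}\notin K_n^r$. In that case $x_{n+1}$ becomes the unique new vertex of $K_{n+1}^r$, while some non-negative integer $d=d(x_{n+1},X_n)$ of old vertices of $K_n^r$ lose their extremal status, and the new region $K_{n+1}^r\setminus K_n^r$ is contained in a single disc cap $C=C(x_{n+1},X_n)$ of $K$, namely the one cut off by a radius-$r$ circle supporting $K_n^r$ on the far side of $x_{n+1}$. Consequently
\[
|f_0(K_{n+1}^r)-f_0(K_n^r)| \leq \indi(x_{n+1}\notin K_n^r)(1+d), \qquad A(K_{n+1}^r)-A(K_n^r)\leq \indi(x_{n+1}\notin K_n^r)\cdot A(C).
\]
The contribution from the indicator alone is of the right order, since $\pr(x_{n+1}\notin K_n^r)=\E[A(K\setminus K_n^r)]/A(K)\ll n^{-2/3}$ by Theorem~\ref{thm:expectation}. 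The remaining tasks are the moment bounds $\E[d^2\,\indi(x_{n+1}\notin K_n^r)] \ll n^{-2/3}$ and $\E[A(C)^2\,\indi(x_{n+1}\notin K_n^r)] \ll n^{-8/3}$.

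The main technical obstacle is the first of these, which is the hyperconvex counterpart of Reitzner's key lemma in~\cite{R2003}. His argument covers $K\setminus K_n$ by an ``economic'' family of $O(n^{2/3})$ caps, uses that a vertex of $K_n$ can be killed by $x_{n+1}$ only when both lie in the same or in adjacent caps, and controls the sample intensity inside each cap by Chebyshev-type tail estimates. The crucial geometric input for the adaptation is that, because $r\geq r_M$ and $\partial K\in C^2_+$, a disc cap of height $t$ at a boundary point $p$ has area of order $t^{3/2}(\kappa(p)-1/r)^{-1/2}$ and base arc of order $(t/(\kappa(p)-1/r))^{1/2}$: the same orders as in the linear case, up to $r$-dependent constants. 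This comparison, combined with the integral-geometric framework already developed in~\cite{FoKeVi2014} for the first-moment asymptotics, should let Reitzner's cap-covering and tail-probability arguments transfer with only routine modifications, yielding both moment estimates and hence Theorem~\ref{thm:variance}. The area estimate benefits further from the fact that $A(C)^2$ carries an extra factor of order $n^{-2}$ over the vertex case, which is precisely the gap between the target rates $n^{-2/3}$ and $n^{-8/3}$.
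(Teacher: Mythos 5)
Your skeleton coincides with the paper's: the Efron--Stein jackknife inequality, the reduction to $\E\bigl[(f_0(K_{n+1}^r)-f_0(K_n^r))^2\bigr]\ll n^{-2/3}$ and $\E\bigl[(A(K_{n+1}^r)-A(K_n^r))^2\bigr]\ll n^{-8/3}$, and the bound of the increments by the number of edges visible from $x_{n+1}$ and by the area of the visible cap region. The problem is that the proposal stops exactly where the proof begins. The two moment bounds $\E\bigl[d^2\,\indi(x_{n+1}\notin K_n^r)\bigr]\ll n^{-2/3}$ and $\E\bigl[A(C)^2\,\indi\bigr]\ll n^{-8/3}$ are the entire content of the argument, and you only assert that Reitzner's machinery ``transfers with routine modifications.'' In the paper these estimates are obtained by expanding $\E\bigl(F_n(x_{n+1})^2\bigr)$ into a double sum over ordered pairs of potential edges $F_I,F_J$, integrating out the remaining $n-4+k$ points by independence, reducing to a single cap via a sandwiching lemma (Reitzner's statement that intersecting small caps lie in a bounded dilate of the larger one, combined with the new Claim~\ref{hasznossag} comparing a disc-cap with its inscribed and circumscribed linear caps), and then evaluating the integral with the parametrization \eqref{kulcstrafo}, the Jacobian \eqref{jacobi}, the disc-cap asymptotics \eqref{Vandellestimate2} and a beta-type integral. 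The alternative route you sketch for the key moment bound --- an economic covering of $K\setminus K_n^r$ by $O(n^{2/3})$ caps, adjacency of caps, Chebyshev tail estimates --- is not what \cite{R2003} does in the part relevant here, and no economic cap-covering theory exists off the shelf for $r$-hyperconvex caps; building one is a larger undertaking than the direct computation, so labelling it ``routine'' hides a genuine gap.

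Two further concrete points. First, the claim that $K_{n+1}^r\setminus K_n^r$ is contained in the single disc-cap cut off by a radius-$r$ circle supporting $K_n^r$ and separating $x_{n+1}$ is false as stated (already in the linear model the triangles over the visible edges protrude into any such supporting half-plane cap). What is true, and what is needed, is that the increment lies in the union of the disc-caps of the visible edges, all of which contain $x_{n+1}$ and hence pairwise intersect; the sandwiching lemma then puts them inside a fixed dilate of the largest one --- but only when all these caps are small, i.e.\ on the event $d_H(K,K_n^r)\le\varepsilon_K$, whose complement must be split off using the bound $\pr(d_H(K,K_n)\ge\varepsilon_K)\le(1-c_K)^n$, which in turn rests on the fact from \cite{FoKeVi2014} that $A_+(x,y)>\delta$. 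Second, you do not address the disc-polygon-specific issue that a pair of points spans two unit arcs and may generate two edges; the paper eliminates this by choosing $\varepsilon_K$ so that $A(K\setminus K_n)<\delta$, ensuring only the shorter-arc cap $D_-(x_i,x_j)$ can occur. So the plan has the right shape and the right exponents, but the steps you defer are precisely where the proof lives, and one of the geometric shortcuts you propose in their place does not hold.
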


In the special case when $K$ is the closed circular disc of radius $r$, we prove
the following.

\begin{theorem}\label{them:circle-variance}
With the same hypotheses as in Theorem~\ref{thm:circle}, it holds that
\begin{equation}\label{circle-sides}
\Var (f_0(K_n^r))\approx const.,
\end{equation}
and
\begin{equation}\label{circle-area}
\Var (A(K_n^r)))\ll n^{-2},
\end{equation}
where the implied constants depend only on $r$. 
\end{theorem}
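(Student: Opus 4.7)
The plan is to bound both variances via the Efron--Stein jackknife inequality
\[
\Var f(X_1,\ldots,X_n) \;\leq\; n\,\E\bigl[\bigl(f(X_1,\ldots,X_n) - f(X_2,\ldots,X_n)\bigr)^2\bigr],
\]
applied to $f = f_0(K_n^r)$ and $f = A(K_n^r)$, combined with Theorem~\ref{thm:circle} and the full rotational symmetry of $K = rB^2$. For the vertex variance upper bound, set $K_{n-1}^r := \conv_r(X_n \setminus \{X_1\})$ and $\Delta := f_0(K_n^r) - f_0(K_{n-1}^r)$. The difference vanishes unless $X_1$ is a vertex of $K_n^r$, an event $\{V_1=1\}$ with probability $\pr(V_1=1) = \E f_0(K_n^r)/n = O(1/n)$ by Theorem~\ref{thm:circle}. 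On $\{V_1 = 1\}$, removing $X_1$ destroys one vertex and exposes some number $N_1$ of new ones among the points previously hidden inside the two support discs at $X_1$, so $|\Delta| \leq 1 + N_1$. A cap-style argument combined with rotational symmetry yields $\E[N_1^2 \mid V_1 = 1, X_1 = x] = O(1)$ uniformly in $x \in K$ (the newly exposed points form a Poisson-like collection in a region of expected area $O(1/n)$). Therefore $\E[\Delta^2] \leq \E[V_1 (1 + N_1)^2] = O(1/n)$, and Efron--Stein gives $\Var f_0(K_n^r) = O(1)$.

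For the vertex variance lower bound, observe that $f_0(K_n^r)$ is $\mathbb{Z}$-valued while $\E f_0(K_n^r) \to \pi^2/2 \notin \mathbb{Z}$ by Theorem~\ref{thm:circle}. Set $\delta := \mathrm{dist}(\pi^2/2, \mathbb{Z}) > 0$. For all $n$ sufficiently large that $|\E f_0(K_n^r) - \pi^2/2| \leq \delta/2$ we have almost surely
\[
|f_0(K_n^r) - \E f_0(K_n^r)| \;\geq\; |f_0(K_n^r) - \pi^2/2| - |\E f_0(K_n^r) - \pi^2/2| \;\geq\; \delta/2,
\]
and hence $\Var f_0(K_n^r) \geq \delta^2/4$.

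For the area variance, Efron--Stein yields $\Var A(K_n^r) \leq n\, \E[(A(K_n^r) - A(K_{n-1}^r))^2]$. Since $K_{n-1}^r \subseteq K_n^r$, the difference vanishes on $\{V_1 = 0\}$ and on $\{V_1 = 1\}$ equals the area $\Delta A_1$ of the sliver $K_n^r \setminus K_{n-1}^r$ attached to $X_1$, which is controlled by the support cap at $X_1$. Combining this with the total missed-area estimate $\E A(K \setminus K_n^r) = O(1/n)$ from Theorem~\ref{thm:circle}, the cap analysis gives $\E[\Delta A_1^2 \mid V_1 = 1, X_1 = x] = O(1/n^2)$. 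Consequently $\E[(A(K_n^r) - A(K_{n-1}^r))^2] = O(1/n^3)$ and $\Var A(K_n^r) = O(1/n^2)$.

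The main obstacle is the uniform conditional second-moment control of the cap statistics $N_1$ and $\Delta A_1$ given $\{V_1 = 1, X_1 = x\}$. In the critical regime $r = r_M$ treated here the cap analysis underpinning the proof of Theorem~\ref{thm:variance} degenerates, since the integrand $(\kappa - 1/r)^{1/3}$ from Theorem~\ref{thm:expectation} vanishes identically along $\partial K$. The remedy is to exploit the full rotational symmetry of $K = rB^2$: a supporting radius-$r$ disc through a given $x \in K$ is parameterised by a single angle, and the corresponding lens area $A(K \setminus D)$ admits an explicit closed form in terms of this angle and $\|x\|$, reducing the needed moment estimates to one-dimensional integrals that can be bounded directly.
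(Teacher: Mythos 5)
Your lower bound for \eqref{circle-sides} is correct and is essentially the paper's own argument: the paper also exploits that $f_0(K_n^r)$ is integer-valued while $\E f_0(K_n^r)\to\pi^2/2$, which is bounded away from the integers (the paper phrases it via Chebyshev's inequality around $\pi^2/2$; your version is the same idea, stated a bit more carefully). The Efron--Stein framework for the upper bounds is also the same as the paper's, up to the cosmetic difference that you remove a point while the paper adds one and counts visible edges $F_n(x_{n+1})$.

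The genuine gap is that the two conditional second-moment claims on which your upper bounds rest, $\E[N_1^2\mid V_1=1,\,X_1=x]=O(1)$ and $\E[\Delta A_1^2\mid V_1=1,\,X_1=x]=O(n^{-2})$ \emph{uniformly in} $x\in K$, are exactly the hard content of the theorem and are asserted rather than proved; moreover, the uniformity is dubious. For $x$ at constant distance from $\partial K$, the event $\{V_1=1,\,X_1=x\}$ has exponentially small probability, but conditionally on it the cap cut off by the supporting unit circle at $x$ has constant area, and your ``Poisson-like collection in a region of expected area $O(1/n)$'' heuristic gives no control there (and even near the boundary it would require second moments of the random region's area and of the point counts, plus care about how conditioning on $\{V_1=1,\,X_1=x\}$ distorts the law of the remaining points). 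One cannot simply multiply $\pr(V_1=1)=O(1/n)$ by a uniform conditional bound; the exponentially rare large-cap configurations must be traded off against their possibly large contribution. The paper handles this by restricting to cap areas at most $c\log n/n$ and estimating the complementary event separately, then bounding $n\,\E F_n(x_{n+1})^2$ through a double sum over pairs of potential edges, the reparametrization by disc-caps, the explicit circle formulas ($A(u,t)\approx t$, $\ell(u,t)\to\pi$ --- the critical scaling $r=r_M$ you correctly identify), and beta-integral asymptotics yielding $\sum_k n^{5-k}\cdot n^{-(5-k)}=O(1)$. Your closing paragraph points to exactly these one-dimensional integrals but does not carry them out, so the upper bounds in \eqref{circle-sides} and \eqref{circle-area} remain unproved as written.
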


From Theorem~\ref{thm:variance} we can conclude the following strong laws of large numbers. Since the proof follows a standard argument, see, for example, B\"or\"oczky, Fodor, Reitzner, V\'\i gh \cite[p. 2294]{BFRV09} or Reitzner \cite[Section 5]{R2003}, we omit the details.

\begin{theorem}\label{law-of-large-numbers}
With the same hypotheses as in Theorem~\ref{thm:expectation}, it holds that
\begin{equation}\label{f:vertices-R}
\lim_{n\to\infty} f_0(K_n^r)\cdot n^{-1/3}=\sqrt[3]{\frac 2{3A(K)}}\cdot 
\Gamma\left ( \frac 53 \right ) \int_{\partial K} 
\left (\kappa(x)-\frac{1}{r} \right)^{1/3} \dx x,
\end{equation}
and 
\begin{equation}\label{f:area-R}
\lim_{n\to\infty} A(K\setminus K_n^r)\cdot n^{2/3}=
\sqrt[3]{\frac {2A(K)^2}{3}}\Gamma\left ( \frac 53 \right ) 
\int_{\partial K} \left (\kappa(x)-\frac{1}{r}
 \right)^{1/3} \dx x
 \end{equation}
with probability $1$.
\end{theorem}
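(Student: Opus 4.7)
The plan is the classical Chebyshev--Borel-Cantelli--sandwich route used in Reitzner \cite{R2003} and B\"or\"oczky, Fodor, Reitzner, V\'\i gh \cite{BFRV09}. Abbreviate $F_n := f_0(K_n^r)$, $A_n := A(K\setminus K_n^r)$, $\mu_n := \E F_n$, and $\nu_n := \E A_n$. Theorem~\ref{thm:expectation} gives $\mu_n \approx n^{1/3}$ and $\nu_n \approx n^{-2/3}$, while Theorem~\ref{thm:variance} (together with the identity $\Var A_n = \Var A(K_n^r)$) gives $\Var F_n \ll n^{1/3}$ and $\Var A_n \ll n^{-5/3}$. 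Chebyshev's inequality then yields
\[
\pr\!\left(|F_n - \mu_n| > \e \mu_n\right) \ll n^{-1/3}, \qquad \pr\!\left(|A_n - \nu_n| > \e \nu_n\right) \ll n^{-1/3}
\]
for every fixed $\e > 0$.

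Next I would pass to the polynomial subsequence $n_k := k^4$, so that $\sum_k n_k^{-1/3} = \sum_k k^{-4/3} < \infty$. The Borel-Cantelli lemma produces $F_{n_k}/\mu_{n_k} \to 1$ and $A_{n_k}/\nu_{n_k} \to 1$ almost surely, which combined with Theorem~\ref{thm:expectation} yields the claimed strong laws along $(n_k)$. The interpolation to the full sequence is immediate for the missed area: since $K_n^r$ is monotone non-decreasing in $n$ (adding a sample point can only enlarge the $r$-hull), $A_n$ is non-increasing, and for $n_k \leq n \leq n_{k+1}$ one obtains
\[
\left(\tfrac{n_k}{n_{k+1}}\right)^{2/3} n_{k+1}^{2/3} A_{n_{k+1}} \ \leq \ n^{2/3} A_n \ \leq \ \left(\tfrac{n_{k+1}}{n_k}\right)^{2/3} n_k^{2/3} A_{n_k}.
\]
Because $n_{k+1}/n_k \to 1$, both bounding sequences share the same almost sure limit, and the strong law for $n^{2/3} A_n$ follows.

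The main obstacle is the vertex count, as $F_n$ is not monotone in $n$ and the simple sandwich above is unavailable. Following \cite{BFRV09}, I would control the block oscillation $\max_{n_k \leq n \leq n_{k+1}} |F_n - F_{n_k}|$ by a separate second-moment estimate: adding a sample point can create at most one new vertex and can only delete vertices from a narrow near-boundary region, so the variance of the incremental change admits the same kind of geometric bound that underlies $\Var F_n \ll n^{1/3}$. Summing over the block and invoking Borel-Cantelli once more along $(n_k)$ shows the block oscillation is $o(n_k^{1/3})$ almost surely. The sandwich then closes and $n^{-1/3} F_n$ converges almost surely to the constant in Theorem~\ref{thm:expectation}, completing the proof.
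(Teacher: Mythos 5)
Your overall strategy (Chebyshev at the block endpoints $n_k=k^4$, Borel--Cantelli, then interpolation inside the blocks) is exactly the standard route the paper points to, and your treatment of the missed area is complete: monotonicity of $K_n^r$ in $n$ makes the sandwich work, and $\sum_k n_k^{-1/3}<\infty$ with $n_{k+1}/n_k\to 1$ is all that is needed. The genuine gap is in the vertex count, precisely at the step you flag as the main obstacle. With $n_k=k^4$ the block length is $N_k:=n_{k+1}-n_k\approx k^3$, while the target oscillation is $\e n_k^{1/3}\approx \e k^{4/3}$. The Efron--Stein computation of Section~3 gives $\E\bigl(f_0(K_{m+1})-f_0(K_m)\bigr)^2\ll m^{-2/3}$, and ``summing over the block'' these second-moment bounds cannot reach the target: Cauchy--Schwarz gives $\E\sum_{m=n_k}^{n_{k+1}-1}|f_0(K_{m+1})-f_0(K_m)|\ll N_k n_k^{-1/3}\approx k^{5/3}$, already larger than $k^{4/3}$; even the sharper first-moment bound $\E|f_0(K_{m+1})-f_0(K_m)|\ll m^{-2/3}$ (which is not what the variance proof records) only gives an expected block oscillation of order $k^{1/3}$, and Markov then yields $\pr(\cdot>\e k^{4/3})\ll k^{-1}$, which is not summable; a Chebyshev bound via $\E T_k^2\le N_k\sum_m\E(\cdot)^2$ is worse, and a union bound of the pointwise Chebyshev estimate over the $\approx k^3$ indices in a block diverges as well. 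Moreover, no re-choice of the subsequence rescues this scheme: summability at the endpoints forces $\sum_k n_k^{-1/3}<\infty$, whereas $\sum_k N_k/n_k$ diverges for every increasing sequence (it telescopes $\log n_k$), so ``per-increment moment bounds $+$ Markov/Chebyshev $+$ Borel--Cantelli'' can never produce a summable bound for the block oscillation. So this part of your proof, as written, does not close.

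The standard repair (and the extra ingredient behind the references the paper cites; the sandwich in B\"or\"oczky--Fodor--Reitzner--V\'\i gh applies only to monotone functionals) is to control the \emph{number of effective increments} rather than their sizes. For $n_k\le n\le n_{k+1}$ one has the deterministic sandwich $f_0(K_{n_{k+1}})-M_k\le f_0(K_n)\le f_0(K_{n_k})+M_k$, where $M_k:=\#\{n_k<i\le n_{k+1}:\ x_i\notin K_{i-1}\}$: indeed, a point of $X_{n_k}$ that is a vertex of a later disc-polygon is a vertex of $K_{n_k}$, while any vertex of $K_n$ (resp.\ of $K_{n_{k+1}}$) not belonging to $X_{n_k}$ (resp.\ to $X_n$) was almost surely outside the hull at the moment of its arrival. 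Conditionally on $X_{n_k}$, $M_k$ is stochastically dominated by a binomial variable with $N_k$ trials and success probability $A(K\setminus K_{n_k})/A(K)$; by the already-proved almost sure statement for the area along $(n_k)$ this probability is $\ll n_k^{-2/3}$ eventually, so the conditional mean of $M_k$ is $O(k^{1/3})$ and a Chernoff bound makes $\pr(M_k>\e n_k^{1/3})$ summable in $k$. Borel--Cantelli then gives $M_k=o(n_k^{1/3})$ a.s., the sandwich closes, and the strong law for $n^{-1/3}f_0(K_n^r)$ follows. If you replace your second-moment block estimate by this binomial/Chernoff argument (or any comparable concentration bound on the number of points falling outside the current hull during a block), the proof is correct and coincides with the intended standard argument.
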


In the theory of random polytopes there is more information on models in which the polytopes are generated as the convex hull of random points from a convex body $K$ than on polyhedral sets produced by random closed half-spaces containing $K$. For some recent results and references in this direction see, for example, B\"or\"oczky, Fodor, Hug \cite{BFH10}, B\"or\"oczky, Schneider \cite{BS08}, Fodor, Hug, Ziebarth \cite{FHZ16} and the survey by Schneider \cite{Sch17}.

In Section~\ref{circumscribed}, we consider a model of random disc-polygons that contain a given convex disc with $C^2_+$ boundary. In this circumscribed probability model, we give asymptotic formulas for the expectation of the number of vertices of the random disc-polygon, the area difference and the perimeter difference of the random disc-polygon and $K$, cf. Theorem~6. Furthermore,  Theorem~7 provides an asymptotic upper bound on the variance of the number of vertices of the random disc-polygons. 

The outline of the paper is the following. In Section~\ref{preparations} we collect some geometric facts that are needed for the arguments. Theorem~\ref{thm:variance} is proved in Section~\ref{proofofthm3}, and Theorem~\ref{them:circle-variance} is verified in Section~\ref{proofofthm4}. In Section~\ref{circumscribed}, we discuss a different probability model in which $K$ is approximated by the intersection of random closed circular discs. This model is a kind of dual to the inscribed one.

\section{Preparations}\label{preparations}
We note that it is enough to prove Theorem~\ref{thm:variance} for the case when $r_M<1$ and $r=1$, and
Theorem~\ref{them:circle-variance} for $r=1$. 
The general statements then follow by a simple scaling argument.
Therefore, from now on we assume
that $r=1$ and to simplify notation we write $K_n$ for $K_n^1$.

Let $\overline{B}^2$ denote the open unit ball of radius $1$ centred at the origin $o$. A {\em disc-cap (of radius $1$)} of $K$ is a set of the form $K\setminus (\overline{B}^2+p)$ for some $p\in \R^2$.
   
We start with recalling the following notations from \cite{FoKeVi2014}. 
Let $x$ and $y$ be two points from $K$. 
The two unit circles passing through $x$ and
$y$ determine two dics-caps of $K$, which we denote by
$D_-(x,y)$ and $D_+(x,y)$, respectively, such that $A(D_-(x,y))\leq A(D_+(x,y))$.
For brevity of notation, we write $A_-(x,y)=A(D_-(x,y))$
and $A_+(x,y)=A(D_+(x,y))$. 
It was shown in \cite{FoKeVi2014} (cf. Lemma~3) that if the boundary of $K$ is
of class $C^2_+$ ($r_M<1$), then there exists a $\delta>0$ 
(depending only on $K$) with the property that for any $x, y\in\inti K$ it
holds that $A_+(x,y)>\delta$.

We need some further technical lemmas about general disc-caps. Let $u_x\in S^1$ denote the (unique) outer unit normal to $K$ at the boundary point $x$, and $x_u\in \partial K$ the unique boundary point with outer unit normal $u\in S^1$. 

\begin{lemma}[\cite{FoKeVi2014}, pp. 905, Lemma 4.1.]\label{vertexclaim}
	Let $K$ be a convex disc with $C^2_{\textcolor{red}{+}}$ smooth boundary and assume that $\kappa_m>1$.
	Let $D=K\setminus (\overline{B}^2+p)$ be a non-empty disc-cap of $K$ (as above).  Then there exists a unique
	point $x_0\in \partial K \cap \partial D$ such that there exists a $t\geq 0$ with $B^2+p=B^2+x_0-(1+t) u_{x_0}.$
	We refer to $x_0$ as the {\em vertex} of $D$ and to $t$ as the {\em height of $D$}. 
\end{lemma}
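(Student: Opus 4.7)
The plan is to identify $x_0$ as the farthest point of $\partial K$ from $p$ and to derive uniqueness from the ``$K$ slides freely in a unit disc'' consequence of $r_M < 1$.

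\textbf{Existence.} Since $\partial K$ is compact, $x \mapsto \|x - p\|$ attains a maximum on $\partial K$ at some point $x_0$; set $M := \|x_0 - p\|$. Because $D$ is non-empty, some point of $K$ is at distance $\geq 1$ from $p$, so $M \geq 1$. Working in an arc-length parametrization of $\partial K$, the first-order optimality condition forces $x_0 - p$ to be orthogonal to the tangent, hence parallel to $u_{x_0}$, and the second-order condition, which requires $\kappa(x_0)\,(x_0 - p) \cdot u_{x_0} \geq 1$, selects the outer-normal direction: $x_0 - p = M u_{x_0}$. Setting $t := M - 1 \geq 0$ yields $p = x_0 - (1+t) u_{x_0}$, and since $\|x_0 - p\| \geq 1$ we get $x_0 \in \partial K \cap \partial D$.

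\textbf{Uniqueness.} Suppose $x_0$ satisfies the conclusion with parameter $t$, and set $c := x_0 - u_{x_0}$. The slide-freely property for radius $1 \geq r_M$ yields $K \subseteq \{y : \|y - c\| \leq 1\}$ with tangency at $x_0$; in particular $\|y - c\| \leq 1$ for every $y \in K$. From $p = c - t u_{x_0}$ we get $\|c - p\| = t$, so the triangle inequality gives
\[
\|y - p\| \leq \|y - c\| + \|c - p\| \leq 1 + t \qquad \text{for every } y \in K.
\]
When $t > 0$, equality forces $y - c$ to be the unit vector in the direction of $c - p = t u_{x_0}$, namely $y - c = u_{x_0}$, hence $y = x_0$. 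Thus $x_0$ is the unique maximizer of $\|\cdot - p\|$ on $K$, and any other candidate $(x_0', t')$ must coincide with this point (and parameter).

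The main difficulty is the uniqueness step: it is here that the hypothesis $\kappa_m > 1$ (equivalently $r_M < 1$) enters decisively, via the slide-freely consequence, to turn the equality case of the triangle inequality into a single-point equation. Without this hypothesis the farthest-point maximizer need not be unique, so the slide-freely argument is doing real work. The degenerate case $t = 0$ (where $K \subseteq \overline{B}^2 + p$ and $D$ reduces to a set of tangent points) does not arise for a disc-cap with non-empty interior and can be dealt with by a short separate argument if needed.
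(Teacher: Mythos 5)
The paper itself gives no proof of this lemma (it is imported verbatim from \cite{FoKeVi2014}*{Lemma 4.1}), so there is no in-paper argument to compare against; judged on its own merits, your proof is correct in substance and is the natural one. The existence step (farthest point of $\partial K$ from $p$; the first-order condition makes $x_0-p$ normal and the second-order condition $\kappa(x_0)\,(x_0-p)\cdot u_{x_0}\ge 1$ picks the outer direction -- alternatively, $K\subset B^2M+p$ scaled, i.e.\ $K$ lies in the disc of radius $M=\|x_0-p\|$ about $p$ touching at $x_0$, which forces $u_{x_0}=(x_0-p)/M$ without any curvature computation) and the uniqueness step for $t>0$ via the triangle inequality are both sound. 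Two points should be tightened. First, the paper states free sliding only for radius $r_M$; to get $K\subseteq B^2+c$ with $c=x_0-u_{x_0}$ you need the one-line remark that the radius-$r_M$ disc tangent at $x_0$, namely $r_MB^2+x_0-r_Mu_{x_0}$, is contained in $B^2+x_0-u_{x_0}$ because the centres are at distance $1-r_M$.

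Second, and more substantively: the lemma assumes only that $D$ is non-empty, not that it has non-empty interior, so the tangential case $t=0$ (where $K$ lies in the closed unit disc centred at $p$ and $D$ consists of contact points only) is part of the statement, and your equality analysis genuinely needs $t>0$; the remark that this case ``does not arise'' is only true under the stronger non-empty-interior hypothesis, which is not assumed. The deferred argument is indeed short, but it should be supplied, e.g.\ as follows: if $x\neq x'$ were two contact points, place $p$ at the origin, so that $u_x=x$, $u_{x'}=x'$ and $\|x\|=\|x'\|=1$; free sliding in a circle of radius $r_M<1$ gives $K\subseteq r_MB^2+x-r_Mx$, hence $\|x'-(1-r_M)x\|\le r_M$, and squaring yields $1-2(1-r_M)\langle x,x'\rangle+(1-r_M)^2\le r_M^2$, i.e.\ $\langle x,x'\rangle\ge 1$, forcing $x=x'$. (One also needs the observation, which you implicitly use, that a candidate with $t'=0$ cannot coexist with one with $t>0$: the bound $\|y-p\|\le 1+t'$ applied to $y=x_0$ rules this out.) With these additions the proof is complete.
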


Let $D(u,t)$ denote the disc-cap with vertex $x_u\in\partial K$ and
height $t$. Note that for each $u\in S^1$, there exists 
a maximal positive constant $t^*(u)$ such that $(B+x_u-(1+t)u)\cap K\neq\emptyset$
for all $t\in [0, t^*(u)]$. 
For simplicity we let $A(u,t)=A(D(u,t))$ and let $\ell(u,t)$ denote 
the arc-length of $\partial D(u,t)\cap (\partial B+x_u-(1+t)u)$. 

We need some estimates for $A(u,t)$ and $\ell(u,t)$, that we recall from \cite[p. 906, Lemma 4.2]{FoKeVi2014}:
\begin{equation}\label{Vandellestimate}
\lim_{t\to 0^+}\ell(u_x,t)\cdot t^{-1/2}= 2\sqrt{\frac{2}{\kappa(x)-1}}, \quad \quad
\lim_{t\to 0^+} A(u_x,t)\cdot t^{-3/2}=\frac43 \sqrt{\frac{2}{\kappa(x)-1}}.
\end{equation}

It is clear that \eqref{Vandellestimate} imply that $A(u,t)$ and $\ell(u,t)$ satisfy the following relations uniformly in $u$:
\begin{equation}\label{Vandellestimate2}
\ell(u_x,t) \approx t^{1/2}, \quad\quad
A(u_x,t)\approx t^{3/2},
\end{equation}
where the implied constants depend only on $K$.

Let $D$ be a disc-cap of $K$ with vertex $x$. For a line $e\subset\R^2$ with $e\perp u_x$, let $e_+$ denote closed half plane containing $x$. Then there exist a maximal cap $C_-(D)=K\cap e_+\subset D$, and a minimal cap $C_+(D)=e'_+\cap K\supset D$.

\begin{claim}\label{hasznossag}
There exists a constant $\hat c$ depending only $K$ such that if the height of the disc-cap $D$ is sufficiently small, then 
\begin{equation}\label{sapkatart}
C_-(D)-x\subset \hat c (C_+(D)-x).
\end{equation}	
\end{claim}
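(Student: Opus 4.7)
My plan is to set up local coordinates at the vertex $x$, identify both $C_-(D)$ and $C_+(D)$ explicitly via the osculating parabola of $\partial K$, and then read off the inclusion. Place $x$ at the origin with outer normal $u_x = (0,1)$, so that $K$ lies locally in $\{Y \leq 0\}$ and $\partial K$ admits the $C^2_+$ expansion $Y = -\kappa(x) X^2/2 + O(X^3)$. The unit circle bounding the disc-cap $D$, centred at $p = (0,-1-t)$, has the expansion $Y = -t - X^2/2 + O(X^4)$ near its top $(0,-t)$. Intersecting this arc with $\partial K$ yields the two endpoints of $\partial D \cap \partial K$ at $X = \pm\sqrt{2t/(\kappa(x)-1)} + O(t)$, both with $Y$-coordinate $-t\kappa(x)/(\kappa(x)-1) + O(t^2)$.

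Consequently the two extremal horizontal lines are $Y = -t$ (tangent to the unit circle at its top) and $Y = -t\kappa(x)/(\kappa(x)-1) + O(t^2)$ (through the two endpoints), so $C_-(D) = K \cap \{Y \geq -t\}$ has height $t$ while $C_+(D) = K \cap \{Y \geq -t\kappa(x)/(\kappa(x)-1) + O(t^2)\}$ has height approximately $t\kappa(x)/(\kappa(x)-1)$. The ratio of heights is bounded above by $\kappa_m/(\kappa_m-1) < \infty$, which depends only on $K$ since $\kappa_m > 1$.

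The inclusion itself is then short: by the extremality definitions one has $C_-(D) \subset D \subset C_+(D)$, so $C_-(D) \subset C_+(D)$, and translating by $-x$ yields the stated conclusion already with $\hat{c} = 1$. The step I expect to be the real technical obstacle, and the reason for the ``sufficiently small height'' hypothesis, is absorbing the $O(X^3)$ remainder in the curvature expansion of $\partial K$ into the leading parabolic $O(X^2)$ term; this is exactly what is needed in the companion inequality $C_+(D) - x \subset \hat{c}(C_-(D) - x)$ (which the paper will presumably invoke elsewhere) with $\hat{c}$ comparable to $\kappa_m/(\kappa_m - 1)$.
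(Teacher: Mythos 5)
You correctly spotted that the inclusion as printed is trivial: since $C_-(D)\subset D\subset C_+(D)$ by definition, $\hat c=1$ already works, and the substantive statement (the one invoked later together with Reitzner's inclusion $C_+(D_2)\subset \bar c(C_+(D_1)-x_{D_1})+x_{D_1}$ to conclude $D_2\subset c_1(D_1-x_{D_1})+x_{D_1}$) is the transposed one, $C_+(D)-x\subset \hat c\,(C_-(D)-x)$. That is also what the paper's own proof establishes: it first reduces, by convexity (a cap of $K$ with apex $x$ and height $\lambda h$ lies in the $\lambda$-dilate about $x$ of the cap of height $h$), to bounding the height ratio $h_+/h_-$ by a constant depending only on $K$; then, instead of expanding $\partial K$, it encloses $K$ in the supporting disc $\hat B=RB^2+x-Ru_x$ with a fixed $R\in(1/\kappa_m,1)$, notes that for the disc-cap $\hat D=\hat B\setminus(\overline B^2+p)$ one has $\hat h_-=h_-$ and $\hat h_+\ge h_+$, and bounds $\hat h_+/\hat h_-$ by an explicit two-circle computation (yielding roughly $1/(1-R)$). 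Your osculating-parabola computation produces the consistent ratio $\kappa(x)/(\kappa(x)-1)\le \kappa_m/(\kappa_m-1)$, so the two routes agree on the numerology.

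The difference matters exactly at the point you flag as the obstacle. Under the standing hypothesis $\partial K\in C^2_+$ the remainder in $Y=-\kappa(x)X^2/2+\dots$ is only $o(X^2)$, not $O(X^3)$, and to obtain a single $\hat c$ and a single height threshold valid for every disc-cap you must make that error term uniform in the vertex $x$ (via compactness and uniform continuity of $\kappa$ on $\partial K$); you would also still need to write out the convexity step converting the height-ratio bound into the set inclusion for the transposed claim, which you only allude to. Neither issue is fatal, but both are precisely what the paper's comparison-disc trick avoids: the monotonicity $K\subset\hat B$ leaves $h_-$ unchanged and only increases $h_+$, so one exact computation with two circles, uniform in $x$ by construction, replaces all Taylor expansions. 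In short, your reading of the statement and the asymptotics are right, and your route is workable, but as written it leaves the uniformity step (and the final dilation argument) open, whereas the paper closes both with an elementary comparison.
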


\begin{proof}
Let us denote by $h_-$ ($h_+$) the height of $C_-(D)$ ($C_+(D)$ resp.), which is the distance of $x$ and $e$ ($e'$ resp.). By convexity, it is enough to find a constant $\hat{c}>0$ such that for all disc-caps of $K$ with sufficiently small height $h_+/h_-<\hat c$ holds.

Choose an arbitrary $R\in (1/\kappa_m,1)$, and consider $\hat B=RB^2+x-Ru_x$, the disc of radius $R$ that supports $K$ in $x$. Clearly, $\hat B \supseteq K$ implies $D=K\cap (\overline{B}^2+p)\subset (\hat B \cap (\overline{B}^2+p) =\hat D$. Also, for the respective heights $\hat h_-$ and $\hat h_+$ of $C_-(\hat D)$ and $C_+(\hat D)$, we have $\hat h_-=h_-$ and $\hat h_+>h_+$. Thus, it is enough to find $\hat c$ such that $\hat h_+/ \hat h_-<\hat c$. 
The existence of such $\hat c$ is clear from elementary geometry.
\end{proof}

Let $x_i,x_j$ ($i\neq j$) be two points from $X_n$, and let 
$B(x_i,x_j)$ be one of the unit discs that contain
$x_i$ and $x_j$ on its boundary. The shorter arc of
$\partial B(x_i,x_j)$ forms an edge of $K_n$ if the entire set $X_n$
is contained in $B(x_i,x_j)$. Note that it may happen that 
the pair $x_i,x_j$ determines two edges of $K_n$ if the above
condition holds for both unit discs that contain $x_i$ and $x_j$
on its boundary.  

First, we note that for the proof of Theorem~\ref{thm:variance}, similar to Reitzner \cite{R2003}, we may
assume that the Hausdorff distance $d_H(K, K_n)$ of $K$ and $K_n$ is at most $\varepsilon_K$, where $\varepsilon_K>0$ is a suitably chosen constant.
This can be seen the following way. Assume that $d_H(K, K_n)\geq \varepsilon_K$. 
Then there exists a point $x$
on the boundary of $K_n$ such that $\varepsilon_KB^2+x\subset K$.
There exists a supporting circle of $K_n$ through $x$ that determines
a disc-cap of height at least $\varepsilon_K$. By the above remark, 
the probability content of this disc-cap is at least $c_K>0$, where $c_K$
is a suitable constant depending on $K$ and $\varepsilon_K$. Then
$$
\pr(d_H(K, K_n)\geq \varepsilon_K)\leq \left (1-c_K\right )^{n}. 
$$

Our main tool in the variance estimates is the Efron-Stein inequality
\cite{ES1981}, 
which has previously 
been used to provide upper estimates on the variance of various 
geometric quantities associated with random polytopes in convex bodies,
cf. Reitzner \cite{R2003}. 
For more on this topic and further references we refer to the recent survey articles by B\'ar\'any \cite{Ba08} and Schneider \cite{Sch17}.

\section{Proof of Theorem~\ref{thm:variance}}\label{proofofthm3}
We present the proof of the upper bound on the variance of the vertex number in detail, and only indicate the modifications needed to prove the upper bound on the missed area. Our argument is similar to the one in Reitzner \cite[Sections~4 and 6]{R2003}.

For the number of vertices of $K_n$, the Efron-Stein inequality \cite{ES1981} states the following
\begin{equation}\label{ESi}
\Var f_0(K_n)\leq (n+1) \E (f_0(K_{n+1})-f_0(K_n))^2. 
\end{equation}

Let $x$ be an arbitrary point of $K$
and  let $x_ix_j$ be an edge of $K_n$. 
Following Reitzner \cite{R2003}, we say that the edge $x_ix_j$ 
is visible from $x$ if $x$ is not contained in $K_n$ and 
it is not contained in the unit disc of the edge $x_ix_j$.
For a point $x\in K\setminus K_n$, let $\F_n(x)$ denote the set of 
edges of $K_n$ that can be seen from $x$, and for 
$x\in K_n$ set $\F_n(x)=\emptyset$. Let $F_n(x)=|\F_n(x)|$.  

Let $x_{n+1}$ be a uniform random point in $K$ chosen independently from 
$X_n$. 
If $x_{n+1}\in K_n$, then $f_0(K_{n+1})=f_0(K_n)$. 
If, on the other hand, 
$x_{n+1}\not\in K_n$, then 
\begin{align*}
f_0(K_{n+1})& =f_0(K_n)+1-(F_n(x_{n+1})-1)\\
& =f_0(K_n)-F_n(x_{n+1})+2.
\end{align*}
Therefore,
$$|f_0(K_{n+1})-f_0(K_n)|\leq 2 F_n(x_{n+1}),$$
and by the Efron--Stein jacknife inequality
\begin{align}\label{ESspec}
\Var (f_0(K_n)) & \leq (n+1)\E (f_0(K_{n+1})-f_0(K_n))^2\\
& \leq 4(n+1)\E(F_n^2(x_{n+1})).\notag
\end{align}

Similar to Reitzner, we introduce the following notation  
(cf. \cite{R2003} p. 2147). Let $I=(i_1,i_2), i_1\neq i_2$, $i_1, i_2\in \{1,2,\ldots \}$ be an ordered pair of indices. 
Denote by $F_I$ the shorter arc of the
unique unit circle incident with $x_{i_1}$ and $x_{i_2}$ on
which $x_{i_1}$ follows $x_{i_2}$ in the positive cyclic
ordering of the circle.

Let $\indi(A)$ denote the indicator function of the event $A$.



We wish to estimate the expectation $\E (F_n^2(x_{n+1}))$ under the condition
that $d_H(K, K_n)<\varepsilon_K$. To compensate for the cases in which 
$d_H(K, K_n)\geq \varepsilon_k$, we add an error term $O((1-c_K)^n)$.  
\begin{align}
\E (& F_n(x_{n+1})^2) = \frac 1{A(K)^{n+1}}\int_K\cdots \int_K \left (\sum_I \indi (F_I\in\F_{n}(x_{n+1}))\right )^2 \d X_n\d x_{n+1}\nonumber\\
& = \frac 1{A(K)^{n+1}} \int_K\cdots \int_K \left (\sum_I \indi (F_I\in\F_{n}(x_{n+1}))\right ) \nonumber \\
& \quad \quad \times \left (\sum_J \indi (F_J\in\F_{n}(x_{n+1}))\right ) \d X_n \d x_{n+1}\nonumber\\
&\leq \frac 1{A(K)^{n+1}}\sum_I\sum_J \int_K\cdots \int_K \indi (F_I\in\F_{n}(x_{n+1}))
\indi (F_J\in\F_{n}(x_{n+1}))\nonumber\\
&\quad\quad\times\indi(d_H(K,K_n)\leq\varepsilon_K) \d X_n \d x_{n+1}
+O((1-c_K)^n)\nonumber
\end{align}
Choose $\varepsilon_K$ so small that $A(K\setminus K_n)<\delta$. 
Note that with this choice of $\varepsilon_K$ only one
of the two shorter arcs determined by $x_{i_1}$ and $x_{i_2}$ can
determine an edge of $K_n$. 

Now we fix the number $k$ of common elements of $I$ and $J$, that is, $|I\cap J|=k$. 
Let $F_1$ denote one of the shorter arcs spanned by $x_1$ and $x_2$, and
let $F_2$ be one of the shorter arcs determined by $x_{3-k}$ and $x_{4-k}$. 
Since the random points are independent, we have that
\begin{align}
&\ll \frac 1{A(K)^{n+1}} \sum_{k=0}^{2}{n\choose 2}{2\choose k}{n-2\choose 2-k} \int_K\cdots\int_K \indi(F_1\in\F_n(x_{n+1}))+\notag\\
&\quad\quad\times\indi(F_2\in\F_n(x_{n+1}))\indi(d_H(K,K_n)\leq\varepsilon_K)\d X_n\d x_{n+1}+O((1-c_K)^n)\notag\\
&\ll \frac 1{A(K)^{n+1}} \sum_{k=0}^{2}n^{4-k} \int_K\cdots\int_K \indi(F_1\in\F_n(x_{n+1}))\notag\\
&\quad\quad\times\indi(F_2\in\F_n(x_{n+1}))\indi(d_H(K,K_n)\leq\varepsilon_K)\d X_n\d x_{n+1}+O((1-c_K)^n). \label{masodik}
\end{align}

Since the roles of $F_1$ and $F_2$ are symmetric, we may assume that 
$\diam C_+(D_1)\ge \diam C_+(D_2)$, where $D_1=D_-(x_1,x_2)$ and $D_2= D_-(x_{3-k}, x_{4-k})$ are the corresponding disc-caps. Thus,
\begin{multline*}
\ll \frac 1{A(K)^{n+1}} \sum_{k=0}^{2}n^{4-k}\int_K\cdots\int_K \indi(F_1\in\F_n(x_{n+1}))\\
\times\indi(F_2\in\F_n(x_{n+1}))
\indi (\diam C_+(D_1)\ge \diam C_+(D_2))\\
\times\indi(d_H(K,K_n)\leq\varepsilon_K)\d X_n\d x_{n+1}+O((1-c_K)^n). 
\end{multline*}
Clearly, $x_{n+1}$ is a common point of the disc caps $D_1$ and $D_2$, so we may write that 
\begin{align*}
\leq &\frac 1{A(K)^{n+1}} \sum_{k=0}^{2}n^{4-k}\int_K\cdots\int_K \indi(F_1\in\F_n(x_{n+1}))\\
&\times\indi(D_1\cap D_2\neq\emptyset)
\indi (\diam C_+(D_1)\ge \diam C_+(D_2))\\
&\times\indi(d_H(K,K_n)\leq\varepsilon_K)\d X_n\d x_{n+1}+O((1-c_K)^n). 
\end{align*}
In order for $F_1$ to be an edge of $K_n$, it is necessary that $x_{5-k},\ldots x_n\in K\setminus D_1$, and for $F_1\in\F_n(x_{n+1})$ $x_{n+1}$ must be in $D_1$. Therefore
\begin{align}
\nonumber \ll &\frac 1{A(K)^{n+1}} \sum_{k=0}^{2}n^{4-k} \int_K\cdots\int_K (A(K)-A(D_1)))^{n-4+k}A(D_1)\\
&\nonumber\times\indi(D_1\cap D_2\neq\emptyset) 
\indi (\diam C_+(D_1)\ge \diam C_+(D_2))\\
&\notag\times\indi(d_H(K,K_n)\leq\varepsilon_K)\d x_1\cdots \d x_{4-k}+O((1-c_K)^n)\\
\label{kellszamformula} \ll&  \sum_{k=0}^{2}n^{4-k} \int_K\cdots\int_K \left (1-\frac{A(D_1)}{A(K)} \right )^{n-4+k}\frac{A(D_1)}{A(K)}\\
&\notag\times\indi(D_1\cap D_2\neq\emptyset)
\indi (\diam C_+(D_1)\ge \diam C_+(D_2))\\
&\times\notag\indi(d_H(K,K_n)\leq\varepsilon_K)\d x_1\cdots \d x_{4-k}+O((1-c_K)^n). \nonumber 
\end{align} 

Reitzner proved (see \cite[pp. 2149--2150]{R2003}) that if $D_1\cap D_2\neq\emptyset$, $d_H(K,K_n)\leq\varepsilon_K$ and $\diam C_+(D_1)\ge \diam C_+(D_2)$ then there exists a constant $\bar c$ (depending only on $K$) such that $C_+(D_2)\subset \bar c(C_+(D_1)-x_{D_1})+x_{D_1}$, where $x_{D_1}$ is the vertex of $D_1$. Combining this with Claim \ref{hasznossag} we obtain that there is a constant $c_1$ depending only on $K$, such that $D_2\subset c_1(D_1-x_{D_1})+x_{D_1}$. Hence 
$A(D_2)\le c_1^2 A(D_1)$, and therefore
\begin{align*}
\int_K\cdots\int_K \indi(D_1\cap D_2\neq\emptyset)
\indi (\diam C_c(D_1)\ge \diam C_c(D_2))\\
\times \indi(d_H(K,K_n)\leq\varepsilon_K)\d x_3\cdots \d x_{4-k}\ll A(D_1)^{2-k}. 
\end{align*}
We continue by estimating (\ref{kellszamformula}) term by term (omitting the $O((1-c_K)^n)$ term).
\begin{align*}
& n^{4-k} \int_K\cdots\int_K \left (1-\frac{A(D_1)}{A(K)} \right )^{n-4+k}\frac{A(D_1)}{A(K)}  \indi(D_1\cap D_2\neq\emptyset)\\
& \quad\quad\times
\indi (\diam C_c(D_1)\ge \diam C_c(D_2))\indi(d_H(K,K_n)\leq\varepsilon_K)\d x_1\cdots \d x_{4-k} \\
\ll& n^{4-k} \int_K \int_K \left (1-\frac{A(D_1)}{A(K)} \right )^{n-4+k}\left (\frac{A(D_1)}{A(K)} \right) ^{3-k} \indi(d_H(K,K_n)\leq\varepsilon_K) \d x_1\d x_2. 
\end{align*}

Now, we use the following parametrization of $(x_1,x_2)$ the same way as in \cite{FoKeVi2014} to transform the integral.
Let 
\begin{equation}\label{kulcstrafo}
(x_1, x_2)=\Phi (u, t, u_1, u_2),
\end{equation}
where $u, u_1, u_2\in S^1$ and $0\leq t\leq t_0(u)$ are chosen such that 
$$D(u,t)=D_1=D_-(x_1, x_2),$$ 
and
$$(x_1, x_2)=(x_u-(1+t)u+u_1, x_u-(1+t)u+u_2).$$

More information on this transformation can be found in \cite[pp. 907-909.]{FoKeVi2014}, here we just recall that the Jacobian of $\Phi$ is
 \begin{equation}\label{jacobi}
 |J\Phi|=\left ( 1+t-\frac{1}{\kappa (x_u)}\right )|u_1\times u_2|,
 \end{equation} 
where $u_1\times u_2$ denotes the cross product of $u_1$ and $u_2$.
 
 Let $L(u,t)=\partial D_1\cap \mathrm{int } K$, then we obtain that
\begin{align}
&\nonumber \ll n^{4-k} \int_{S^1} \int_0^{t^*(u)}\int_{L(u,t)} \int_{L(u,t)} \left  (1-\frac{A(u,t)}{A(K)} \right )^{n-4+k}\left (\frac{A(u,t)}{A(K)}\right )^{3-k} \\ 
&\nonumber \times\left ( 1+t-\frac{1}{\kappa(x_u)}\right ) |u_1\times u_2|\dx u_1 \dx u_2 \dx t \dx u\\
&\nonumber = n^{4-k} \int_{S^1} \int_0^{t^*(u)} \left  (1-\frac{A(u,t)}{A(K)} \right )^{n-4+k}\left (\frac{A(u,t)}{A(K)}\right )^{3-k} \\ 
& \times\left ( 1+t-\frac{1}{\kappa(x_u)}\right )
 (\ell(u,t)-\sin \ell(u,t)) \dx t \dx u.
\end{align}

From now on the evaluation follows a standard way, thus we only sketch the major steps.

 First, we split the domain of integration with respect to $t$ into two parts. Let $h(n)=(c \ln n/n)^{2/3}$, where $c$ is a sufficiently large absolute constant. Using (\ref{Vandellestimate2}), one can easily see that
\begin{align*}
&n^{4-k}\int_{S^1} \int_{h(n)}^{t^*(u)}\left  (1-\frac{A(u,t)}{A(K)} \right )^{n-4+k}\left (\frac{A(u,t)}{A(K)}\right )^{3-k}\\ 
&\times\left ( 1+t-\frac{1}{\kappa(x_u)}\right ) (\ell(u,t)-\sin \ell(u,t)) \dx t \dx \ll n^{-2/3}.
\end{align*}

Therefore, it is enough to estimate
\begin{align*}
& n^{4-k} \int_{S^1} \int_0^{h(n)} \left  (1-\frac{A(u,t)}{A(K)} \right )^{n-4+k}\left (\frac{A(u,t)}{A(K)}\right )^{3-k} \\ 
& \times\left ( 1+t-\frac{1}{\kappa(x_u)}\right )
 (\ell(u,t)-\sin \ell(u,t)) \dx t \dx u.
\end{align*}

Using (\ref{Vandellestimate2}), the fact that $\kappa(x)>1$ for all $x\in\partial K$, and the Taylor series of the $\sin$ function, furthermore, assuming that $n$ is large enough, we obtain that
\begin{align*}
& \ll n^{4-k} \int_{S^1} \int_0^{h(n)} \left  (1- c_K t^{3/2} \right )^{n-4+k}\left (t^{3/2}\right )^{3-k} \cdot 1 \cdot t^{3/2}  \dx t \dx u\\
& \ll n^{4-k} \int_0^{h(n)} \left  (1- c_K t^{3/2} \right )^{n-4+k}t^{\frac{12-3k}{2}} \dx t\ll n^{-2/3},
\end{align*}
where in the last step we applied \cite[pp. 2290, formula (11)]{BFRV09}. Together with (\ref{ESspec}), this yields the desired upper estimate for $\Var f_0(K_n)$.

As the the argument for the case of the missing area is very similar, we only indicate the major steps.

Again, we use the Efron-Stein inequality \cite{ES1981}, which states the following for the missed area
\begin{equation}\label{ESi2}
\Var A(K\setminus K_n)\leq (n+1) \E (A(K_{n+1})-A(K_n))^2. 
\end{equation}
Therefore, we need to estimate $\E(A(K_{n+1})-A(K_n))^2$. Following the ideas of Reitzner \cite{R2003}, one can see that 
\begin{align*}
& \E(A(K_{n+1})-A(K_n))^2 \ll \sum_I \sum_J \int_K\cdots\int_K \indi(F_1\in\F_n(x_{n+1})) A(D_1) \\ 
& \quad\quad\times \indi(F_2\in\F_n(x_{n+1})) A(D_2) \indi(d_H(K,K_n)\leq\varepsilon_K)\d X_n\d x_{n+1}.
\end{align*}
From here, we may closely follow the proof of (\ref{vertex-variance}), the only major difference is the extra $A(D_1)A(D_2)\leq A^2(D_1)$ factor in the integrand. After similar calculations as for the vertex number, we obtain that
\begin{align*}
& \ll n^{4-k} \int_{S^1} \int_0^{h(n)} \left  (1-\frac{A(u,t)}{A(K)} \right )^{n-4+k}\left (\frac{A(u,t)}{A(K)}\right )^{5-k} \\ 
& \times\left ( 1+t-\frac{1}{\kappa(x_u)}\right )
 (\ell(u,t)-\sin \ell(u,t)) \dx t \dx u.\\
& \ll n^{4-k} \int_0^{h(n)} \left  (1- c_K t^{3/2} \right )^{n-4+k}t^{\frac{20-3k}{2}} \dx t\ll n^{-8/3},
\end{align*}
which proves (\ref{area-variance}) (the missing factor $n$ comes from the Efron-Stein inequality).

\section{The case of the circle}\label{proofofthm4}
In this section we prove Theorem~\ref{them:circle-variance}. In particular, we give a detailed proof of the estimate \eqref{circle-sides} for the variance of the number of vertices of the random disc-polygon, and we only point out the necessary modifications that are needed to verify \eqref{circle-area}.

Without loss of generality, we may assume that $K=B^2$, and that $r=1$.
 
We begin by recalling from \cite{FoKeVi2014} that for any $u\in S^1$ and $0\leq t\leq 2$, it holds that
\begin{equation}
\ell (u,t)=2\arcsin \sqrt{1-\frac{t^2}{2}},
\end{equation}
and
\begin{equation}
A(u,t)=A(t)=t\sqrt{1-\frac{t^2}{2}}+2\arcsin\frac{t}{2}.
\end{equation}

\begin{proof}[Proof of Theorem~\ref{them:circle-variance} \eqref{circle-sides}]
From \eqref{circle-sides-exp} and Chebyshev's inequality, it follows that
$$1=\pr\left (\left |f_0(K_n^1)-\frac{\pi^2}{2}\right |>0.05\right )\leq \frac{\Var(f_0(K_n^1))}{0.05^2},$$
thus 
$$\Var(f_0(K_n^1))\geq 0.05^2.$$
This proves that $\Var(f_0(K_n^1))\gg const.$.

In order to prove the asymptotic upper bound in \eqref{circle-sides}, we use a modified version of the argument of the previous section. With the same notation as in Section~3, the Efron-Stein inequality for the vertex number yields that 
$$\Var(f_0(K_n^1))\ll n\E (F_n(x_{n+1}))^2.$$

Following a similar line of argument as above, we obtain that
\begin{align*}
n&\E(F_n(x_{n+1}))^2=\frac n{\pi^{n+1}}\int_{(B^2)^{n+1}}\left (\sum_{I}\indi(F_I\in \F_n(x_ {n+1}))\right )\\
& \quad \quad \times
\left (\sum_{J}\indi(F_J\in \F_n(x_ {n+1}))\right ) \d x_1\cdots \d x_n\d x_{n+1}\\
&\leq \frac n{\pi^{n+1}} \sum_{I}\sum_{J}
\int_{(B^2)^{n+1}}\indi(F_I\in \F_n(x_ {n+1}))
\indi(F_J\in \F_n(x_ {n+1})) \d x_1\cdots \d x_n\d x_{n+1}
\end{align*}

Now, let $|I\cap J|=k$, where $k=0,1,2$, and let $F_1=x_1x_2$ and $F_2=x_{3-k}x_{4-k}$. 
By the independence of the random points (and by also taking into account their order), we get that
\begin{align*}
&\ll \frac n{\pi^{n+1}} \sum_{k=0}^{2}{n\choose 2}{2\choose k}{n-2\choose 2-k} \int_{(B^2)^{n+1}}\indi(F_1\in\F_n(x_{n+1}))\\
 &\quad\quad\quad\quad\quad\times \indi(F_2\in\F_n(x_{n+1}))\d x_1\cdots\d x_n\d x_{n+1}. \\
&\ll \frac 1{\pi^{n+1}}\sum_{k=0}^{2}n^{5-k} \int_{(B^2)^{n+1}} \indi(F_1\in\F_n(x_{n+1}))\indi(F_2\in\F_n(x_{n+1}))\d x_1\cdots\d x_n\d x_{n+1}.
\end{align*}
By symmetry, we may also assume that $A(D_1)\geq A(D_2)$, therefore
\begin{align*}
	&\ll \sum_{k=0}^{2} n^{5-k} \int_{(B^2)^{n+1}} \indi(F_1\in\F_n(x_{n+1}))\indi(F_2\in\F_n(x_{n+1}))\\
	&\quad\quad\quad\quad\quad \times\indi (A(D_1)\geq A(D_2)\d x_1\cdots\d x_n\d x_{n+1}.
\end{align*}
By integrating with respect to $x_{5-k}, \ldots, x_n$ and $x_{n+1}$ we obtain that
\begin{align*}
&\ll\sum_{k=0}^{2}n^{5-k}\int_{B^2}\cdots\int_{B^2}
\left (1-\frac{A(D_1)}{\pi}\right )^{n-4+k}\frac{A(D_1)}{\pi}\\
&\quad\quad\quad\quad\quad \times\indi (A(D_1)\geq A(D_2))
\d x_1\cdots \d x_{4-k}
\end{align*}
If $A(D_1)\geq A(D_2)$, then $D_2$ is fully contained in the circular annulus whose width is equal to the height of the disc-cap $D_1$. The area of this annulus not more than $2A(D_1)$. Therefore,
\begin{align*}
&\ll\sum_{k=0}^{2}n^{5-k}\int_{B^2}\int_{B^2}
\left (1-\frac{A(D_1)}{\pi}\right )^{n-4+k}A(D_1)^{3-k}\d x_1\d x_2.
\end{align*}
As common in these arguments, we may assume that $A(D_1)/\pi<c\log n/n$ for some suitable constant $c$ that will be determined later. To see this, let $A(D_1)/\pi\geq c\log n/n$. Then
\begin{align*}
&\left (1-\frac{A(D_1)}{\pi}\right )^{n-4+k}A(D_1)^{3-k}\\
&\leq \left (\frac{\pi c\log n}{n}\right )^{3-k}\cdot\exp \left (-\frac{c(n-4+k)\log n}{n}\right )\\
&\ll \left (\frac{\log n}{n}\right )^{3-k}\cdot n^{-c}\\
&\ll n^{-c}.
\end{align*}
If $c>0$ is sufficiently large, then the contribution of the case when $A(D_1)/\pi\geq c\log n/n$ is $O(n^{-1})$. Thus, 
\begin{multline*}
n\E(F_n(x_{n+1}))
\ll\sum_{k=0}^{2}n^{5-k}\int_{B^2}\int_{B^2}
\left (1-\frac{A(D_1)}{\pi}\right )^{n-4+k}A(D_1)^{3-k}\\
\times\indi(A(D_1)\leq c\log n /n)\d x_1\d x_2 +O(n^{-1}).
\end{multline*}
Now, we use the same type of reparametrization as in the previous section. Let $(x_1,x_2)=(-t u_1,-tu_2)$, $u\in S^1$ and $0\leq t<c\log n/n$. Then
\begin{align*}
	&\ll \sum_{k=0}^{2}n^{5-k}
	\int_{S^1}\int_{0}^{c^*\log n/n}\int_{S^1}\int_{S^1}
	\left (1-\frac{A(u,t)}{\pi}\right )^{n-4+k}A(u,t)^{3-k}\\
	&\times t |u_1\times u_2|\d u_1\d u_2 \d u \d t +O(n^{-1})\\
	&\ll \sum_{k=0}^{2}n^{5-k}
	\int_{0}^{c^*\log n/n}
	\left (1-\frac{A(u,t)}{\pi}\right )^{n-4+k}A(u,t)^{3-k}\\
	&\times t (l(t)-\sin l(t))\d t +O(n^{-1}).
\end{align*}
Using that $l(t)\to \pi$ as $t\to 0^+$, and the Taylor series of $V(u,t)$ at $t=0$, we obtain that there exists a constant $\omega>0$ such that
\begin{align*}
	&\ll \sum_{k=0}^{2}n^{5-k}
	\int_{0}^{c^*\log n/n}
	\left (1-\omega t\right )^{n-4+k}t^{4-k}\d t +O(n^{-1})
\end{align*}	
Now, using the well-known formula for beta integrals (cf. \cite[pp. 2290, formula (11)]{BFRV09}), we obtain that
\begin{align*}	
	&\ll \sum_{k=0}^{2}n^{5-k} n^{-(5-k)}+O(n^{-1})\\
	&\ll const,
\end{align*}
which finishes the proof of the upper bound in \eqref{circle-sides}.
\end{proof}

In order to prove the asymptotic upper bound \eqref{circle-area}, only slight modifications are needed in the above argument.

\section{A circumscribed model}\label{circumscribed}
 In the section we consider circumscribed random disc-polygons. 
  Let $K\subset\R^2$ be a convex disc with $C^2_+$ smooth boundary, and $r\geq\kappa_m^{-1}$. Consider the following set
  $$K^{*,r}=\left \{ x\in \R^2 \; | \; K\subseteq rB^2+x\right \},$$ 
  which is also called the $r$-hyperconvex dual, or $r$-dual for short, of $K$. It is known that $K^{*,r}$ is a convex disc with $C^2_+$ boundary, and it also has the property that the curvature is at least $1/r$ at every boundary point. For further information see \cite{FoKuVi16} and the references therein.
  
  For $u\in S^1$, let $x(K,u)\in \partial K$ ($x(K^{*,r},u)\in \partial K^{*,r}$ resp.) the unique point on $\partial K$ ($\partial K^{*,r}$ resp.), where the outer unit normal to $K$ ($K^{*,r}$ resp.) is $u$. For a convex disc $K\subset\R^2$ with $o\in\inti K$, let $h_K(u)=\max_{x\in K} \langle x,u\rangle$ denote the support function of $K$. Let $Per(\, \cdot \,)$ denote the perimeter.
  
  The following Lemma sums up some results from \cite[Section 2]{FoKuVi16}.
\begin{lemma}{\cite{FoKuVi16}}\label{dualprop} 
	With the notation above
\begin{enumerate}
\item $h_K(u)+h_{K^{*,r}}(-u)=r$ for any $u\in S^1$,
\item $\kappa_K^{-1}(x(u, K))+\kappa_{K^{*,r}}^{-1}(x(-u, K^{*,r}))=r$ for any $u\in S^1$,
\item $Per (K)+ Per (K^{*,r})= 2r\pi$,
\item $A(K^{*,r})=A(K)-r \cdot Per(K)+r^2\pi.$
\end{enumerate}
\end{lemma}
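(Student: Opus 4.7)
The approach I would take is to derive all four items from a single parametric description of $\partial K^{*,r}$, together with the classical support-function formulas for the radius of curvature, perimeter and area of a $C^2_+$ convex disc. The crux is the observation that under the hypothesis $r\geq\kappa_m^{-1}$ (so that $K$ slides freely in a disc of radius $r$), each point $x_0\in\partial K^{*,r}$ is the centre of a unique disc $rB^2+x_0$ containing $K$ that is tangent to $\partial K$ at a single boundary point $y_0=x(u,K)$. Elementary geometry then gives the parametrization
$$x(-u,K^{*,r})=x(u,K)-ru,\qquad u\in S^1,$$
with outer unit normal $-u$ at the point on the left. Item~(1) follows by a direct inner product: $h_{K^{*,r}}(-u)=\langle x(u,K)-ru,-u\rangle=r-h_K(u)$.

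For item~(2), I would pass to the smooth angular parametrization $u(\theta)=(\cos\theta,\sin\theta)$ and set $\tilde h_K(\theta)=h_K(u(\theta))$. The standard identity $\rho_K(\theta)=\tilde h_K(\theta)+\tilde h_K''(\theta)=\kappa_K^{-1}(x(u(\theta),K))$ expresses the radius of curvature through the support function. Rewriting the parametrization as $x(K^{*,r},u(\phi))=x(K,u(\phi+\pi))+ru(\phi)$, differentiating in $\phi$ and using $dx(K,u(\theta))/d\theta=\rho_K(\theta)u'(\theta)$, together with $u'(\phi+\pi)=-u'(\phi)$, yield $\rho_{K^{*,r}}(\phi)=r-\rho_K(\phi+\pi)$, which translates into item~(2) by setting $u=u(\phi+\pi)$. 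Integrating over $[0,2\pi]$ and using $\per(K)=\int_0^{2\pi}\rho_K(\theta)\,\d\theta$ immediately gives item~(3).

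For item~(4), I would invoke the support-function area formula $A(K)=\tfrac12\int_0^{2\pi}\tilde h_K(\theta)\rho_K(\theta)\,\d\theta$. Substituting $\tilde h_{K^{*,r}}(\phi)=r-\tilde h_K(\phi+\pi)$ from item~(1) and $\rho_{K^{*,r}}(\phi)=r-\rho_K(\phi+\pi)$ from item~(2), shifting $\theta=\phi+\pi$, and expanding the product produces four integrals that evaluate to $\pi r^2$, $-\tfrac r2\per(K)$, $-\tfrac r2\per(K)$ and $A(K)$ respectively; here one uses $\int_0^{2\pi}\tilde h_K\,\d\theta=\int_0^{2\pi}\rho_K\,\d\theta=\per(K)$, which follows from the periodicity of $\tilde h_K'$. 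Summing gives item~(4). The only non-routine step is the initial parametrization: one must verify that under $r\geq\kappa_m^{-1}$ the set $K^{*,r}$ is a $C^2_+$ convex disc and that $u\mapsto x(u,K)-ru$ traces out $\partial K^{*,r}$ bijectively. Once this is in place, the remainder is straightforward calculus on $S^1$.
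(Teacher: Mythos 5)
Your derivation is correct. Note, however, that the paper does not prove Lemma~\ref{dualprop} at all: it is imported verbatim from the cited reference \cite{FoKuVi16}, so there is no internal proof to compare against; your support-function argument is the standard route to these identities and is essentially the derivation found in that source. The one point worth making explicit is that the equality $h_{K^{*,r}}(-u)=r-h_K(u)$ (rather than merely the trivial inequality $h_{K^{*,r}}(-u)\le r-h_K(u)$ coming from $K^{*,r}\subseteq\{x:\langle x,-u\rangle\le r-h_K(u)\}$) is exactly where the hypothesis $r\ge\kappa_m^{-1}$ enters, via Blaschke rolling: the disc of radius $r$ internally tangent to $K$ at $x(u,K)$ must contain $K$, so that its centre $x(u,K)-ru$ really lies in $K^{*,r}$; you invoke this correctly through the sliding-freely remark, and the remaining smoothness/bijectivity issue you flag is settled by the same relation, since $\tilde h_{K^{*,r}}(\phi)=r-\tilde h_K(\phi+\pi)$ is $C^2$ with $\tilde h_{K^{*,r}}+\tilde h_{K^{*,r}}''=r-\rho_K\ge 0$ (strictly positive when $r>\kappa_m^{-1}$, the case actually used in the paper).
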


Now, we turn to the probability model. Let $K$ be a convex disc with $C^2_+$ boundary, and let $r>\kappa_m^{-1}$ as before.
Let $X_n=\{x_1,\ldots, x_n\}$ be a sample of $n$ independent
random points chosen from $K^{*,r}$ according to the uniform probability 
distribution, and define
\[K^{*,r}_{(n)}=\bigcap_{x\in X_n} rB^2+x. \]

$K^{*,r}_{(n)}$ is a random disc-polygon that contains $K$. Observe that, by definition $K^{*,r}_{(n)}=(\conv_r(X_n))^{*,r}$, and consequently $f_0(K^{*,r}_{(n)})=f_0(\conv_r(X_n))$. We note that this is a very natural approach to define a random disc-polygon that is circumscribed about $K$ that has no clear analogy in linear convexity. (If one takes the limit as $r\to \infty$, the underlying probability measures do not converge.) The model is of special interest in the case $K=K^{*,r}_{(n)}$, which happens exactly when $K$ is of constant width $r$.

\begin{theorem}\label{circformulas}
Assume that $K$ has $C^2_+$ boundary, and let $r>\kappa_m^{-1}$. With the notation above
\begin{align}\label{vertexexpcirc}
\lim_{n\to\infty}\E(f_0(K^{*,r}_{(n)}))\cdot n^{-1/3}= & \sqrt[3]{\frac {2r}{3(A(K)-r \cdot Per(K)+r^2\pi)}}\times \\
 \nonumber & \Gamma\left ( \frac 53 \right ) \int_{\partial K} 
\left (\kappa(x)-\frac{1}{r} \right)^{2/3} \dx x.
\end{align}

Furthermore if $K$ has $C^5_+$ boundary, then 

\begin{align*}
\lim_{n\to \infty} n^{2/3}\cdot \left (Per K^{*,r}_{(n)} - Per K \right ) = & 
\frac{(12(A(K)-r \cdot Per(K)+r^2\pi))^{2/3}}{36}\cdot  \Gamma \left (\frac 23 \right ) \\
&\times r^{-2/3} \int_{\partial K} \left ( \kappa(x) -\frac 1r \right )^{-1/3} \left (4 \kappa (x) - \frac 1r \right ) \d x;\\
\lim_{n\to \infty} n^{2/3}\cdot A(K^{*,r}_{(n)}\backslash K)= & 
\frac{(12(A(K)-r \cdot Per(K)+r^2\pi))^{2/3}}{12} \times  \\
& \Gamma \left (\frac 23 \right )\cdot r^{-2/3} \int_{\partial K} \left ( \kappa(x) -\frac 1r \right )^{-1/3} \d x.
\end{align*}
\end{theorem}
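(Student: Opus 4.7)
The plan is to reduce everything to the inscribed disc-polygon model of Theorem~\ref{thm:expectation} via the duality relations in Lemma~\ref{dualprop}. The starting point is the identity
\[K^{*,r}_{(n)} = (\conv_r(X_n))^{*,r},\]
which the authors have already noted. This immediately gives $f_0(K^{*,r}_{(n)}) = f_0(\conv_r(X_n))$, since vertices of an $r$-hyperconvex set correspond bijectively to arcs of its $r$-dual. Consequently, the expectation in \eqref{vertexexpcirc} equals the expected vertex number of the \emph{inscribed} $r$-hull of $n$ uniform random points in the convex disc $K^{*,r}$, to which Theorem~\ref{thm:expectation} applies directly with the roles $K \leftrightarrow K^{*,r}$.

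The bulk of the work for the vertex formula is then a change of variables converting the integral $\int_{\partial K^{*,r}}(\kappa_{K^{*,r}}-1/r)^{1/3}\,\d y$ into an integral over $\partial K$. Parametrizing both boundaries by the outer unit normal, Lemma~\ref{dualprop}(2) yields $\rho_{K^{*,r}}(-u) = r - 1/\kappa_K(u)$, hence
\[\kappa_{K^{*,r}}(-u) - \tfrac{1}{r} = \frac{1}{r^2(\kappa_K(u) - 1/r)}, \qquad \d\ell_{K^{*,r}} = r\bigl(\kappa_K(u)-\tfrac{1}{r}\bigr)\,\d\ell_K.\]
Multiplying these two expressions collapses the integrand to $r^{1/3}(\kappa_K-1/r)^{2/3}\,\d\ell_K$, and Lemma~\ref{dualprop}(4) supplies $A(K^{*,r})=A(K)-r\cdot Per(K)+r^2\pi$, producing \eqref{vertexexpcirc} exactly.

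For the perimeter and area formulas, I would exploit Lemma~\ref{dualprop}(3),(4) applied simultaneously to the two dual pairs $(K,K^{*,r})$ and $(\conv_r(X_n),K^{*,r}_{(n)})$. Part (3) gives
\[Per(K^{*,r}_{(n)}) - Per(K) = Per(K^{*,r}) - Per(\conv_r(X_n)),\]
and subtracting the two instances of part (4) yields the key identity
\[A(K^{*,r}_{(n)}\setminus K) = r\bigl[Per(K^{*,r}) - Per(\conv_r(X_n))\bigr] - A(K^{*,r}\setminus \conv_r(X_n)).\]
Thus both target expectations are linear combinations of the expected missed area and expected perimeter defect for the inscribed $r$-hull of uniform points in $K^{*,r}$. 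The area asymptotic on the right is already supplied by \eqref{f:area-R} of Theorem~\ref{thm:expectation}, and, after the same change of variables used above, it contributes the $\int_{\partial K}(\kappa-1/r)^{2/3}\,\d x$ piece hidden inside the expansion $4\kappa-1/r = 4(\kappa-1/r)+3/r$ that one obtains by collecting all terms in the perimeter formula.

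The main obstacle, and the reason the $C^5_+$ hypothesis enters, is the missing ingredient: a R\'enyi--Sulanke-type asymptotic of the form
\[\lim_{n\to\infty} n^{2/3}\,\E\bigl[Per(L)-Per(\conv_r(X_n^L))\bigr] = c(L,r)\]
for inscribed disc-polygons in a smooth disc $L$. I would establish this along the lines of the proof of Theorem~\ref{thm:expectation}, but with a more delicate expansion of the disc-cap. Specifically, using the parametrization $\Phi(u,t,u_1,u_2)$ from \eqref{kulcstrafo} and the Jacobian \eqref{jacobi}, the expected perimeter defect can be written as an integral over disc-caps; the integrand is the difference between the arc length of $\partial L$ subtended by the disc-cap $D(u,t)$ and the arc length $\ell(u,t)$ of its chord arc. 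Unlike the area-defect integrand, this quantity vanishes to leading order in $t$, so one must Taylor-expand $\partial L$ up to fifth order at $x_u$ to extract the next-order coefficient; this is exactly where the curvature combination $(4\kappa(x)-1/r)$ appears, together with terms controlling the derivatives of curvature that integrate to zero over the closed boundary. After the standard splitting $t\le h(n)$ versus $t>h(n)$ and the Gamma-integral identity from \cite[p.~2290, formula~(11)]{BFRV09}, this produces the perimeter asymptotic. Combining with the area asymptotic and the change-of-variable computation from the first paragraph completes the proof of Theorem~\ref{circformulas}.
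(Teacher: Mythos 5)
Your treatment of the vertex formula \eqref{vertexexpcirc} is correct and is exactly the paper's argument: since $K^{*,r}_{(n)}=(\conv_r(X_n))^{*,r}$ gives $f_0(K^{*,r}_{(n)})=f_0(\conv_r(X_n))$, one applies Theorem~\ref{thm:expectation} to $K^{*,r}$ and transforms the boundary integral via Lemma~\ref{dualprop}; your relations $\kappa_{K^{*,r}}(-u)-\tfrac1r=\bigl(r^2(\kappa_K(u)-\tfrac1r)\bigr)^{-1}$ and $\d\ell_{K^{*,r}}=r(\kappa_K-\tfrac1r)\,\d\ell_K$ are precisely the substitution carried out in the paper. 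Your duality reduction for the other two limits is also sound: $\mathrm{Per}(K^{*,r}_{(n)})-\mathrm{Per}(K)=\mathrm{Per}(K^{*,r})-\mathrm{Per}(\conv_r(X_n))$ and $A(K^{*,r}_{(n)}\setminus K)=r\bigl[\mathrm{Per}(K^{*,r})-\mathrm{Per}(\conv_r(X_n))\bigr]-A(K^{*,r}\setminus\conv_r(X_n))$ follow from Lemma~\ref{dualprop}(3),(4) applied to both dual pairs (note only that for the pair $(\conv_r(X_n),K^{*,r}_{(n)})$ you need the version of these identities for general hyperconvex discs, as in \cite{FoKuVi16}, not just the smooth case quoted), and one can check that after the change of variables the $\int_{\partial K}(\kappa-1/r)^{2/3}$ contributions cancel exactly, leaving the stated area formula.

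The gap is in how you obtain the perimeter-deficit asymptotic for the inscribed model. This is not something the paper proves either: it is Theorem~1.2 of \cite{FoKeVi2014}, which the paper's proof simply cites (and which is the sole source of the $C^5_+$ hypothesis); in the body $L$ where the points are sampled its integrand is, up to constants, $(\kappa_L-1/r)^{1/3}(3\kappa_L+1/r)$, and the combination $(4\kappa-1/r)$ in the statement arises only after dualizing back to $\partial K$ --- not, as you assert, directly as the coefficient extracted from the fifth-order Taylor expansion of $\partial L$. Your proposed re-derivation of this ingredient is only a heuristic sketch: the expansion that produces the exact constant and curvature combination, the uniformity needed to integrate it, and the bookkeeping of the derivative terms that "integrate to zero" are all asserted rather than carried out, so as written the second and third limits are not established. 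The shortest repair is to do what the paper does and invoke \cite{FoKeVi2014}*{Theorem~1.2} applied to $K^{*,r}$, then finish with your (correct) duality identities and change of variables; if you insist on re-proving the perimeter asymptotic, that is essentially reproducing a nontrivial part of \cite{FoKeVi2014} and would need the full computation, not the one-sentence outline given here.
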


\begin{proof}
By Lemma~\ref{dualprop} it follows that $K^{*,r}$ has also $C^2_+$ boundary. As $f_0(K_{(n)}^{*,r})=f_0(\conv_r(X_n))$, we immediately get from \cite[Theorem 1.1]{FoKeVi2014} that 

\begin{equation}
\lim_{n\to\infty}\E(f_0(K_{(n)}^{*,r}))\cdot n^{-1/3}= \sqrt[3]{\frac 2{3 A(K^{*,r})}}\cdot 
\Gamma\left ( \frac 53 \right ) \int_{\partial K^{*,r}} 
\left (\kappa(x)-\frac{1}{r} \right)^{1/3} \dx x.
\end{equation}

Using Lemma~\ref{dualprop}, we proceed as follows
\begin{align*}
\int_{\partial \dual K} 
\left (\kappa(x)-\frac{1}{r} \right)^{1/3} \dx x & =\int_{S^1} \frac{\left ( \kappa(x(\dual K, u))-\frac{1}{r} \right)^{1/3}}{\kappa(x(\dual K, u))}  \d u = \\
\int_{S^1} \frac{\left ( \frac{\kappa(x(K, -u))}{r\kappa(x(K, -u))-1}-\frac{1}{r} \right)^{1/3}}{\frac{\kappa(x(K, -u))}{r\kappa(x(K, -u))-1}}  \d u & =\int_{S^1} r^{1/3} \frac{\left ( \kappa(x(K, u))-\frac{1}{r} \right)^{2/3}}{\kappa(x(K, u))}\d u \\ & =r^{1/3}\int_{\partial K} 
\left (\kappa(x)-\frac{1}{r} \right)^{2/3} \dx x.
\end{align*}

Together with Lemma~\ref{dualprop}, this proves \eqref{vertexexpcirc}.

The rest of the theorem can be proved similarly, by using \cite[Theorem 1.1 and Theorem 1.2]{FoKeVi2014}, and Lemma~\ref{dualprop}.

\end{proof}

As an obvious consequence of Theorem~\ref{thm:variance}, Lemma~\ref{dualprop}, and the definition of $K^{*,r}_{(n)}$, we obtain the following theorem.

\begin{theorem}
Assume that $K$ has $C^2_+$ boundary, and let $r>\kappa_m^{-1}$. With the notation above
$$\Var (f_0(K^{*,r}_{(n)})) \ll n^{1/3}.$$
\end{theorem}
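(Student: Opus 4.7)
The plan is to reduce this variance bound directly to the inscribed case already settled in Theorem~\ref{thm:variance}. The crucial observation, already recorded in the paper just before Theorem~\ref{circformulas}, is the combinatorial identity
\[
f_0(K^{*,r}_{(n)}) = f_0(\conv_r(X_n)),
\]
where $X_n$ is a uniform random sample of $n$ points from $K^{*,r}$. This follows because $K^{*,r}_{(n)}=(\conv_r(X_n))^{*,r}$ by definition, and $r$-duality sets up a natural bijection between vertices of an $r$-hull and the arcs (edges) of the dual intersection of discs.

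Next I would check that $K^{*,r}$ itself satisfies the hypotheses of Theorem~\ref{thm:variance} with the same radius $r$. By Lemma~\ref{dualprop} (items (1) and (2)), $K^{*,r}$ is again a convex disc with $C^2_+$ boundary, and the curvature relation
\[
\kappa_{K^{*,r}}^{-1}(x(-u, K^{*,r})) = r - \kappa_K^{-1}(x(u,K))
\]
implies that the maximum radius of curvature of $K^{*,r}$ equals $r - 1/\kappa_M(K)$, which is strictly smaller than $r$ since $\kappa_M(K)>0$. In particular $r_M(K^{*,r})<r$, so Theorem~\ref{thm:variance} is applicable to the pair $(K^{*,r},r)$.

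Applying Theorem~\ref{thm:variance} to the convex disc $K^{*,r}$ then gives
\[
\Var\bigl(f_0(\conv_r(X_n))\bigr) \ll n^{1/3},
\]
with an implied constant depending only on $K^{*,r}$ and $r$, hence only on $K$ and $r$. Combining this with the identity above yields the desired bound $\Var(f_0(K^{*,r}_{(n)}))\ll n^{1/3}$. There is essentially no obstacle: the entire estimate is powered by Theorem~\ref{thm:variance}, and the only genuine verification is the regularity and curvature transfer from $K$ to $K^{*,r}$, which is immediate from Lemma~\ref{dualprop}.
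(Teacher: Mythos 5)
Your proposal is correct and is essentially the paper's own argument: the paper derives this theorem as an immediate consequence of the identity $f_0(K^{*,r}_{(n)})=f_0(\conv_r(X_n))$, the regularity and curvature transfer in Lemma~\ref{dualprop} (which guarantees $K^{*,r}$ is $C^2_+$ with all radii of curvature strictly less than $r$), and an application of Theorem~\ref{thm:variance} to $K^{*,r}$. Your write-up just makes explicit the curvature verification that the paper leaves as ``obvious.''
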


{\it Remark.} We note that if $K$ is a convex disc of constant width $r$, then $K^{*,r}=K$ (see e.g. \cite{FoKuVi16}), and similar calculations to those in the proof of Theorem~\ref{circformulas} provide some interesting integral formulas. For example,  for a real $p$ we obtain that
$$\int_{\partial K} 
\left (\kappa(x)-\frac{1}{r} \right)^{p} \dx x = r^{1-2p}\int_{\partial K} 
\left (\kappa(x)-\frac{1}{r} \right)^{1-p} \dx x.$$

\section{Acknowledgements}
The research of the authors was partially supported by the National Research,
Development and Innovation Office of Hungary NKFIH 116451 grant. 
V. V\'\i gh was supported by the J\'anos Bolyai Research Scholarship of the Hungarian Academy of Sciences. This research was also supported by the EU-funded Hungarian grant
EFOP-3.6.2-16-2017-00015.

\begin{bibdiv}[References]
\begin{biblist}

\bib{Ba08}{article}{
   author={B{\'a}r{\'a}ny, Imre},
   title={Random points and lattice points in convex bodies},
   journal={Bull. Amer. Math. Soc. (N.S.)},
   volume={45},
   date={2008},
   number={3},
   pages={339--365},
}

\bib{BFV10}{article}{
	author={B{\'a}r{\'a}ny, I.},
	author={Fodor, F.},
	author={V{\'{\i}}gh, V.},
	title={Intrinsic volumes of inscribed random polytopes in smooth convex
		bodies},
	journal={Adv. in Appl. Probab.},
	volume={42},
	date={2010},
	number={3},
	pages={605--619},
}

\bib{BHRS2017}{article}{
	author={B\'ar\'any, Imre},
	author={Hug, Daniel},
	author={Reitzner, Matthias},
	author={Schneider, Rolf},
	title={Random points in halfspheres},
	journal={Random Structures Algorithms},
	volume={50},
	date={2017},
	number={1},
	pages={3--22},
	issn={1042-9832},
}

\bib{BL89}{article}{
	author={I. B\'ar\'any},
	author={D. Larman},
	title={Convex bodies, economic cap coverings, random polytopes},
	journal={Mathematika},
	volume={35},
	date={1988},
	pages={274--291.},
}

\bib{BR10}{article}{
	author={B{\'a}r{\'a}ny, Imre},
	author={Reitzner, Matthias},
	title={On the variance of random polytopes},
	journal={Adv. Math.},
	volume={225},
	date={2010},
	number={4},
	pages={1986--2001},
	issn={0001-8708},
}

\bib{BaraVu}{article}{
	author={B{\'a}r{\'a}ny, Imre},
	author={Vu, Van},
	title={Central limit theorems for Gaussian polytopes},
	journal={Ann. Probab.},
	volume={35},
	date={2007},
	number={4},
	pages={1593--1621},
	issn={0091-1798},
}

\bib{BLNP07}{article}{
   author={K. Bezdek},
   author={Z. L{\'a}ngi},
   author={M. Nasz{\'o}di},
   author={P. Papez},
   title={Ball-polyhedra},
   journal={Discrete Comput. Geom.},
   volume={38},
   date={2007},
   number={2},
   pages={201--230},
}

\bib{BFH10}{article}{
	author={B\"or\"oczky, K\'aroly J.},
	author={Fodor, Ferenc},
	author={Hug, Daniel},
	title={The mean width of random polytopes circumscribed around a convex
		body},
	journal={J. Lond. Math. Soc. (2)},
	volume={81},
	date={2010},
	number={2},
	pages={499--523},
	issn={0024-6107},
}

\bib{BFRV09}{article}{
author={K. J. B\"or\"oczky},
author={F. Fodor},
author={M. Reitzner},
author={V. V\'igh},
title={Mean width of random polytopes in a reasonably smooth convex body},
journal={J. Multivariate Anal.},
volume={100},
date={2009},
pages={2287--2295.},
}

\bib{BS08}{article}{
	author={B{\"o}r{\"o}czky, K{\'a}roly J.},
	author={Schneider, Rolf},
	title={The mean width of circumscribed random polytopes},
	journal={Canad. Math. Bull.},
	volume={53},
	date={2010},
	number={4},
	pages={614--628},
	issn={0008-4395},
}


\bib{ES1981}{article}{
	author={Efron, B.},
	author={Stein, C.},
	title={The jackknife estimate of variance},
	journal={Ann. Statist.},
	volume={9},
	date={1981},
	number={3},
	pages={586--596},
	issn={0090-5364},
}

\bib{FTGF2015}{article}{
	author={Fejes T\'oth, G.},
	author={Fodor, F.},
	title={Dowker-type theorems for hyperconvex discs},
	journal={Period. Math. Hungar.},
	volume={70},
	date={2015},
	number={2},
	pages={131--144},
	issn={0031-5303},
}

\bib{FHZ16}{article}{
	author={Fodor, Ferenc},
	author={Hug, Daniel},
	author={Ziebarth, Ines},
	title={The volume of random polytopes circumscribed around a convex body},
	journal={Mathematika},
	volume={62},
	date={2016},
	number={1},
	pages={283--306},
	issn={0025-5793},
}

\bib{FoKeVi2014}{article}{
	author={Fodor, F.},
	author={Kevei, P.},
	author={V\'\i gh, V.},
	title={On random disc polygons in smooth convex discs},
	journal={Adv. in Appl. Probab.},
	volume={46},
	date={2014},
	number={4},
	pages={899--918},
	issn={0001-8678},
}

\bib{FoKuVi16}{article}{
author={F. Fodor},
author={\'A. Kurusa},
author={V. V\'igh},
title={Inequalities for hyperconvex sets},
journal={Adv. in Geom.},
volume={16},
date={2016},
number={3},
pages={337--348},

}

\bib{FoVi2012}{article}{
author={F. Fodor},
author={V. V\'igh},
title={Disc-polygonal approximations of planar spindle convex sets},
journal={Acta Sci. Math. (Szeged)},
volume={78},
date={2012},
number={1-2},
pages={331--350},
}

\bib{Gr97}{article}{
   author={Gruber, Peter M.},
   title={Comparisons of best and random approximation of convex bodies by
   polytopes},
   note={II International Conference in ``Stochastic Geometry, Convex Bodies
   and Empirical Measures'' (Agrigento, 1996)},
   journal={Rend. Circ. Mat. Palermo (2) Suppl.},
   number={50},
   date={1997},
   pages={189--216},
}

\bib{R2003}{article}{
   author={Reitzner, Matthias},
   title={Random polytopes and the Efron-Stein jackknife inequality},
   journal={Ann. Probab.},
   volume={31},
   date={2003},
   number={4},
   pages={2136--2166},
   issn={0091-1798},
}

\bib{Rei05}{article}{
	author={Reitzner, Matthias},
	title={Central limit theorems for random polytopes},
	journal={Probab. Theory Related Fields},
	volume={133},
	date={2005},
	number={4},
	pages={483--507},
	issn={0178-8051},
}

\bib{ReSu63}{article}{
   author={A. R\'enyi},
   author={R. Sulanke},
title={\"Uber die konvexe H\"ulle von n zuf\"allig gew\"ahlten Punkten},
journal={Z. Wahrscheinlichkeitsth. verw. Geb.},
volume={2},
date={1963},
pages={75--84.},
}

\bib{ReSu64}{article}{
   author={A. R\'enyi},
   author={R. Sulanke},
 title={\"Uber die konvexe H\"ulle von n zuf\"allig gew\"ahlten Punkten, II.},
 journal={Z. Wahrscheinlichkeitsth. verw. Geb.},
volume={3},
date={1964},
pages={138--147.},
}


\bib{sch08}{article}{
	author={R. Schneider},
	title={Recent results on random polytopes. (Survey)},
	journal={ Boll. Un. Mat. Ital., Ser. (9)},
	volume={1},
	date={2008)},
	pages={17 -- 39.},
}

\bib{Sch14}{book}{
	author={Schneider, Rolf},
	title={Convex bodies: the Brunn-Minkowski theory},
	series={Encyclopedia of Mathematics and its Applications},
	volume={151},
	edition={Second expanded edition},
	publisher={Cambridge University Press, Cambridge},
	date={2014},
	pages={xxii+736},
	isbn={978-1-107-60101-7},
}

\bib{Sch17}{article}{
	author={Schneider, Rolf},
	title={Discrete aspects of stochastic geometry},
	conference={
		title={Handbook of Discrete and Computational Geometry, 3rd ed.},
	},
	book={
		publisher={CRC Press},
		place={Boca Raton},
		date={2017},
		pages={299--329},
	},	
}

\bib{schw09}{book}{
	author={Schneider, Rolf},
	author={Weil, Wolfgang},
	title={Stochastic and integral geometry},
	series={Probability and its Applications (New York)},
	publisher={Springer-Verlag, Berlin},
	date={2008},
	pages={xii+693},
	isbn={978-3-540-78858-4},
}

\bib{SY}{article}{
	author={Schreiber, T.},
	author={Yukich, J. E.},
	title={Variance asymptotics and central limit theorems for generalized
		growth processes with applications to convex hulls and maximal points},
	journal={Ann. Probab.},
	volume={36},
	date={2008},
	number={1},
	pages={363--396},
	issn={0091-1798},
}

\bib{Vu05}{article}{
	author={Vu, V. H.},
	title={Sharp concentration of random polytopes},
	journal={Geom. Funct. Anal.},
	volume={15},
	date={2005},
	number={6},
	pages={1284--1318},
	issn={1016-443X},
}

\bib{TTW17}{article}{
	author={Thaele, Christoph},
	author={Turchi, Nicola},
	author={Wespi, Florian},
	title={Random polytopes: central limit theorems for intrinsic volumes},
	journal={Proc. Amer. Math. Soc.},
	status={to appear},
	date={2017},
	}

\bib{TW18+}{article}{
	author={Turchi, Nicola},
	author={Wespi, Florian},
	title={Limit theorems for random polytopes with vertices on convex surfaces},
	journal={arXiv:1706.02944},
	date={2017},
}

\bib{Vu05}{article}{
	author={Vu, V. H.},
	title={Sharp concentration of random polytopes},
	journal={Geom. Funct. Anal.},
	volume={15},
	date={2005},
	number={6},
	pages={1284--1318},
	issn={1016-443X},
}

\bib{Vu06}{article}{
	author={Vu, Van},
	title={Central limit theorems for random polytopes in a smooth convex
		set},
	journal={Adv. Math.},
	volume={207},
	date={2006},
	number={1},
	pages={221--243},
	issn={0001-8708},
}

\bib{WeWi93}{article}{
   author={Weil, Wolfgang},
   author={Wieacker, John A.},
   title={Stochastic geometry},
   conference={
      title={Handbook of convex geometry, Vol.\ A, B},
   },
   book={
      publisher={North-Holland},
      place={Amsterdam},
   },
   date={1993},
   pages={1391--1438},
}

\end{biblist}
\end{bibdiv}

\end{document}